\let\c@author\relax
\definecolor{darkgreen}{rgb}{0.00, 0.50, 0.00}
 \def\thebibliography#1{\section*{References\markboth
  {References}{References}}\list
  {[\arabic{enumi}]}{\settowidth\labelwidth{[#1]}\leftmargin\labelwidth
  \advance\leftmargin\labelsep
  \usecounter{enumi}}
  \def\newblock{\hskip .11em plus .33em minus -.07em}
  \sloppy
  \sfcode`\.=1000\relax}
 \let\OLDthebibliography\thebibliography
 \renewcommand\thebibliography[1]{
   \OLDthebibliography{#1}
   \setlength{\parskip}{3pt}
   \setlength{\itemsep}{0pt plus 1\baselineskip}
 }
\newtheorem{theo}{\bf Theorem} 
\newtheorem{coro}{\bf Corollary}[section]
\newtheorem{lem}[coro]{\bf Lemma} 
\newtheorem{rem}[coro]{\bf Remark} 
\newtheorem{defi}[coro]{\bf Definition}
\newtheorem{prop}[coro]{\bf Proposition}
\def\R{{\mathbb{R}}}
\def\N{{\mathbb{N}}}
\def\rn{{\mathbb{R}^{n}}}
\def\vk{{\varkappa}}
\newcommand{\supp}{{\mathrm{supp}\,}}
\def\rp{{[0,\infty )}}
\def\ve{{\varepsilon}}
\def\vt{{\vartheta}}
\newcommand{\wt}{\widetilde}
\newcommand{\vp}{\varphi}
\newcommand{\cF}{{\mathcal{F}}}
\newcommand{\cE}{{\mathcal{E}}}
\newcommand{\cG}{{\mathcal{G}}}
\newcommand{\cH}{{\mathcal{H}}}
\newcommand{{\SP}}{{\mathcal{Z}}}
\def\Plog{\mathcal{P}^{\text{log}}}
\begin{document}

\begin{frontmatter}

\title{Absence and presence of Lavrentiev's phenomenon\\ for double phase functionals upon every choice of exponents}

\author[1]{Michał Borowski}\ead{m.borowski@mimuw.edu.pl}
\author[1]{Iwona Chlebicka\corref{mycorrespondingauthor}}
\ead{i.chlebicka@mimuw.edu.pl} 
\author[2]{Filomena De Filippis}
\cortext[mycorrespondingauthor]{Corresponding author}
\ead{filomena.defilippis@studenti.univaq.it} 
\author[1]{Błażej Miasojedow}\ead{b.miasojedow@mimuw.edu.pl}

\fntext[myfootnote]{MSC2020: 49J45 (46E30,46E40,46A80)}
\fntext[myfootnote]{M.B. and I.C. are supported by NCN grant 2019/34/E/ST1/00120, B.M. is supported by NCN grant 2018/31/B/ST1/00253, F.D.F. is supported by UnivAQ}

\address[1]{Institute of Applied Mathematics and Mechanics,
University of Warsaw, ul. Banacha 2, 02-097 Warsaw, Poland
}
\address[2]{DISIM, University of L'Aquila, Via Vetoio snc, Coppito, 67100 L'Aquila, Italy
}

\begin{abstract} 

We study classes of weights ensuring the absence and presence of the Lavrentiev's phenomenon for double phase functionals upon every choice of exponents. We introduce a new sharp scale for weights for which there is no Lavrentiev's phenomenon up to a counterexample we provide. This scale embraces the sharp range for $\alpha$-H\"older continuous weights. Moreover, it allows excluding the gap for every choice of exponents $q,p>1$.
\end{abstract}

\begin{keyword}
Lavrentiev's phenomenon, double-phase functionals,  calculus of variations, relaxation methods
\end{keyword}

\end{frontmatter}


\section{Introduction}

We consider the following double-phase functional
\begin{equation}\label{eq:defF}
    \cF[u] = \int_{\Omega}  |\nabla u(x)|^p + a(x)|\nabla u(x)|^{q}\,dx\,,
\end{equation}
over open and bounded $\Omega \subset \rn$, $n> 1$, where $1 \leq p, q < \infty$ and weight $a : \Omega \to [0, \infty)$ is bounded. The functional is designed to model the transition between the region where the gradient is integrable with $p$-th power and the region where it has the higher integrability with $q$-th power. Therefore, we are interested only in~the situation when $p < q$ and $a$ vanishes on some subset of $\Omega$, but $a\not\equiv 0$. Functional $\cF$ and various kinds of its minimizers have been studied since~\cite{zh,Marc-ARMA-89} continued in a vast range of contributions including~\cite{bacomi-cv, colmar, comi, eslev, eslemi,demi,hahato,BS2022} with sharpness discussed in~\cite{badi, eslemi,fomami,zh-86,zh}. More recent developments in this matter may be found in~\cite{BO, filomena, Lukas,BaaBy,basu,Bousquet-Pisa,bmt}. Our main focus is on the approximation properties of the double phase version of Sobolev space, in the spirit of~\cite{yags, Bor-Chl,C-b}, and consequences for the double phase functionals, cf.~\cite{bgs-arma,ibm}. By developments of~\cite{bgs-arma, colmar, bacomi-cv}, revisited in~\cite{ibm}, it is known that the condition
\begin{equation}
    \label{classical-framework}\text{$a\in C^{0,\alpha}(\Omega),\ \alpha \in (0,1],\ $ and $\ p < q \leq p + \alpha$}
\end{equation} is enough for good approximation properties of the energy space by regular functions. This condition is meaningful only provided that $p<q \leq p + 1$, but -- as explained in the paper --  $q$ and $p$ can be actually arbitrarily far from each other as long as the  weight has relevant properties. We also show the sharpness of our new scale up to a counterexample we provide. \newline

Let us at first settle what we mean by the Lavrentiev's phenomenon in our case. For $1<p< q<\infty$ and   $a : \Omega \to [0, \infty)$, we set
\[
M(x,t)=t^p+a(x)t^q.\]
Given a bounded and open set $\Omega\subset\rn$, let us define the energy space\begin{align}\label{eq:Wdef}
    W(\Omega)&:=\left\{\vp\in W^{1,1}_0(\Omega):\quad \int_\Omega M(x,|\nabla \vp|)\,dx<\infty\right\}\,
\end{align}
endowed with a Luxemburg-type norm. 
Note that we have the inclusion $C_c^\infty\subset W$, and in turn
\[\inf_{v\in u_0+W}\cF[v]\leq \inf_{w\in u_0+C_c^\infty}\cF[w]\,.\]
It is known that if $M$ is not regular enough, the inequality above is strict, 
i.e.,
\begin{equation}\label{eq:LavOc}
\inf_{v\in u_0+W}\cF[v]< \inf_{w\in u_0+C_c^\infty}\cF[w]\,,
\end{equation}
which means that the Lavrentiev's phenomenon between spaces $C_c^\infty$ and $W$ occurs. The first example of~such situation, for a different functional, was provided by Lavrentiev, see~\cite{Lav, Mania}. There was a deep interest in the autonomous and nonautonomous problems throughout the decades. See~\cite{Buttazzo-Belloni,Buttazzo-Mizel,eslemi,zh} and references therein as well as already mentioned recent contributions \cite{bacomi-cv, colmar, comi, eslev, eslemi,badi, eslemi,BO, filomena, Lukas,BaaBy,basu,Bousquet-Pisa,bmt,bgs-arma,ibm}. In particular, in~\cite{zh} Zhikov introduced the double-phase functional \eqref{eq:defF} and provide an example of the Lavrentiev's phenomenon in dimension $n=2$ and for $p \in [1,2]$, $q > 3$ and a Lipschitz continuous weight $a$. His example was extended to arbitrary $n \geq 2$ in~\cite{eslemi}, requiring that $p < n < n + \alpha < q$, where $\alpha$ is an exponent of the H\"older continuity of the weight $a$.

The regularity of the possibly vanishing weight $a$ dictates how far apart can be powers $p$ and $q$ to exclude~\eqref{eq:LavOc}. In particular, it was known that if~\eqref{classical-framework} is satisfied, then  there is no Lavrentiev's phenomenon  and $q=p+1$ used to be treated as a borderline. We consider a new sharp scale $\SP^\vk$ that captures the abovementioned result. A function $a\in\SP^\vk$ is assumed to decay in the transition region at least like a power function with an exponent $\vk$ for $\vk\in (0,\infty)$. In turn, our approach extends the result on the range for the absence of the Lavrentiev's gap to $a\in\SP^\vk$ within
\begin{equation}\label{range-no-gamma}
   p< q\leq p+\vk, \quad \vk\in (0,\infty)\,.
\end{equation}  Within the range~\eqref{range-no-gamma}, we prove the absence of the Lavrentiev's phenomenon for $\mathcal{F}$ between $W$ and $C_c^\infty$ up to a~counterexample from Section~\ref{sec:example}. The definition of $\SP^{\vk}$ reads as follows. 
\begin{defi}[Class $\SP^\vk(\Omega)$, $\vk\in (0,\infty)$] Let $\Omega\subset \rn$, $n\in\N$.
 A function $a:\Omega \to [0, \infty)$ belongs to $\SP^\vk(\Omega)$ for $\vk \in (0,\infty)$, if there exists a positive constant $C$ such that
\begin{equation} \label{new-condition}
  a(x) \leq C\left(a(y) + |x-y|^{\vk}\right) \,
\end{equation}
for all $x,y \in \Omega$.
\end{defi} 

\begin{figure}[htbp]\label{fig:fignn}
  \centering
  \includegraphics[width=8cm,height=6cm]{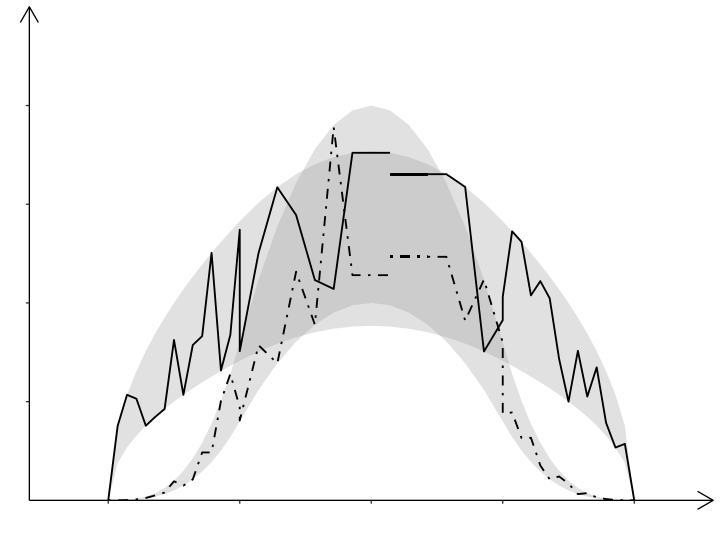}
  \caption{Solid line represents an example of $a_1\in\SP^{\vk_1}$ for $\vk_1\in (0,1)$, while dot-dashed line $a_2\in\SP^{\vk_2}$ for $\vk_2>1$. We stress that $a_2\in\SP^{\vk_2}\subset\SP^{\vk_1}$.}
\end{figure}

Of course, $\alpha$-H\"older continuous functions for $\alpha\in (0,1]$ belong to $\SP^\alpha$, but $\SP^\vk$ with $\vk\in(0,\infty)$ is an~essentially broader class of functions, see Figure~\ref{fig:fignn}. In particular, for every $\vk \in (0, \infty)$, we have that the function $x \mapsto |x|^\vk$ belongs to $\SP^\vk(\rn)$. To provide better understanding of this new scale, we set down its main properties.
\begin{rem}[Basic properties of $\SP^\vk(\Omega)$]\label{rem:zk} If $\Omega \subset \rn$ is an open set, then the following holds.
\begin{enumerate}
    \item A function $a$ belongs to $\SP^\vk(\Omega)$ for $\vk\in(0,1]$ if and only if there exists $\wt a\in C^{0,\vk}(\Omega)$, such that $a$ is comparable to $\wt a$; i.e., there exists a positive constant $c$ such that $\wt a \leq a \leq c\wt a$.
    \item Let $\vk,\beta\in(0,\infty)$. Function $a\in\SP^\vk(\Omega)$ if and only if $a^\beta\in\SP^{\beta\vk}(\Omega)$. 
    \item Function $a\in\SP^\vk(\Omega)$ for $\vk\in(0,\infty)$ if and only if there exists $\wt{a}$ comparable to $a$ such that $\wt{a}^{\frac{1}{\vk}} \in Lip(\Omega)$.
    \item If $0<\vk_1\leq\vk_2$, then $\SP^{\vk_2}(\Omega)\subset\SP^{\vk_1}(\Omega)$. 
\end{enumerate}
\end{rem}
The first point of Remark~\ref{rem:zk} says that for $\vk \in (0, 1]$, class $\SP^\vk(\Omega)$ is similar to H\"older continuity, but it is actually requiring admissible decay rate near regions where $a$ vanishes. The second point of the remark allows extending this intuition to $\vk > 1$, as we can look at some power of $a$. In particular, according to the third point, the $\vk$-th roots of functions in $\SP^{\vk}$ are comparable to Lipschitz continuous functions.  We show examples of functions in $\SP^\vk$ on an interval for large and small values of parameter $\vk$ on Figure~{\ref{fig:fignn}}. In both of~these cases there is no reason for the smoothness or the continuity of functions from $\SP^\vk$. The controlled property is the rate of decay in the transition region, which is comparable to a power function with an exponent $\vk$. {Let us stress that $C^{1,\alpha}$-regularity for $\alpha\in(0,1]$ of the weight implies its $\SP^{1+\alpha}$-regularity, but smoothness of the weight does not give more than $\SP^2$. To state it precisely, we give the following proposition proven in the appendix.
\begin{prop} \label{prop:C1ainSPa+1} If $\Omega \subset \rn$ is an open set, then the following holds.
\begin{enumerate}
        \item[(i)] If $0 \leq a \in C^{1, \alpha}(\overline{\Omega})$ for some $\alpha \in (0, 1]$ and $a>0$ on $\partial \Omega$, then $a \in \SP^{1 + \alpha}(\Omega)$.
        \item[(ii)] 
There exists $0 \leq a \in C^{\infty}(\overline{\Omega}),$ such that $a>0$ on $\partial \Omega$, $a\in \SP^2(\Omega)$, and $a\not\in \SP^{2+\ve}(\Omega)$ for any $\ve>0$.\end{enumerate}
\end{prop}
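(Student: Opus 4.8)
The plan is to handle the two parts by quite different means: part (i) reduces to a single self‑improving gradient estimate, while part (ii) is settled by an explicit quadratic example requiring no estimate at all.

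For (i), I would invoke the third point of Remark~\ref{rem:zk}: since $a$ is comparable to itself, it suffices to show that $a^{1/(1+\alpha)}$ is Lipschitz on $\Omega$, and for this it is enough to prove the pointwise bound
\[
|\nabla a(x)| \leq C\, a(x)^{\alpha/(1+\alpha)} \qquad\text{for all } x\in\Omega .
\]
Indeed, on the open set $\{a>0\}$ one has $\nabla\big(a^{1/(1+\alpha)}\big) = \tfrac{1}{1+\alpha}\,a^{-\alpha/(1+\alpha)}\nabla a$, so the displayed inequality says exactly that $a^{1/(1+\alpha)}$ has bounded gradient there; being continuous and identically zero on the closed set $\{a=0\}$, it is then globally Lipschitz. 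Thus the whole content of (i) is the gradient estimate for a nonnegative $C^{1,\alpha}$ function, which is the conceptual heart of the proof.

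To establish it, fix $x_0\in\Omega$ with $g:=|\nabla a(x_0)|>0$, set $L:=[\nabla a]_{C^{0,\alpha}(\overline\Omega)}$, and follow the steepest–descent ray $x(t)=x_0-t\,\nabla a(x_0)/g$. The $\alpha$–Hölder continuity of $\nabla a$ gives $\tfrac{d}{dt}a(x(t))=-\tfrac1g\,\nabla a(x(t))\cdot\nabla a(x_0)\le -g+L t^{\alpha}$, so integration together with the nonnegativity of $a$ yields
\[
0\le a(x(t)) \le a(x_0) - g\,t + \tfrac{L}{1+\alpha}\,t^{1+\alpha}.
\]
Choosing the optimal time $t=(g/L)^{1/\alpha}$ produces $a(x_0)\ge c\,L^{-1/\alpha}g^{(1+\alpha)/\alpha}$, i.e. precisely $g\le C\,a(x_0)^{\alpha/(1+\alpha)}$. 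The main obstacle is purely one of geometry: the ray must remain in $\Omega$ up to the optimal time. Here the hypotheses enter: by uniform continuity of $a$ on $\overline\Omega$ and $a>0$ on $\partial\Omega$, there are $c_0,\delta>0$ with $a\ge c_0$ on the layer $\{\operatorname{dist}(\cdot,\partial\Omega)<\delta\}$, where the estimate is trivial since $|\nabla a|\le\|\nabla a\|_\infty\le\|\nabla a\|_\infty c_0^{-\alpha/(1+\alpha)}a^{\alpha/(1+\alpha)}$. For $x_0$ with $\operatorname{dist}(x_0,\partial\Omega)\ge\delta$ we have $B(x_0,\delta)\subset\Omega$, so if the optimal time is $\le\delta$ the argument applies verbatim, while if it exceeds $\delta$ (large gradient) I rerun the computation with $t=\delta$, get $|\nabla a(x_0)|\le C_\delta\,a(x_0)$, and absorb one factor via $a\le\|a\|_\infty^{1/(1+\alpha)}a^{\alpha/(1+\alpha)}$. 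Combining the cases gives the gradient bound on all of $\Omega$, hence (i).

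For (ii), no estimate is needed: given the open set $\Omega$, pick any $x_0\in\Omega$ and take $a(x):=|x-x_0|^2$. Then $a\in C^\infty(\overline\Omega)$, $a\ge 0$, and $a>0$ on $\partial\Omega$ because $x_0\notin\partial\Omega$. Membership $a\in\SP^2(\Omega)$ is immediate from $|x-x_0|^2\le 2|x-y|^2+2|y-x_0|^2$, that is $a(x)\le 2\big(a(y)+|x-y|^2\big)$. Finally $a\notin\SP^{2+\ve}(\Omega)$ for every $\ve>0$: testing the defining inequality with $y=x_0$, so that $a(y)=0$, would force $|x-x_0|^2\le C\,|x-x_0|^{2+\ve}$ for all $x\in\Omega$, which fails as $x\to x_0$. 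This also confirms the stated sharpness, since smoothness cannot improve upon $\SP^2$.
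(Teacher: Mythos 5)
Your part (ii) coincides with the paper's example ($a(x)=|x-x_0|^{2}$), with the verification written out correctly, and the computational heart of your part (i) --- the steepest-descent argument giving the Glaeser-type bound $|\nabla a(x)|\leq C\,a(x)^{\alpha/(1+\alpha)}$ --- is exactly the inequality at the core of the paper's proof (which cites a Glaeser-type argument), here obtained directly rather than inside a contradiction scheme. Your boundary-layer device ($a\geq c_{0}$ at distance less than $\delta$ from $\partial\Omega$, by compactness and $a>0$ on $\partial\Omega$) correctly keeps the descent ray inside $\Omega$, and your two-case analysis on the optimal stopping time is sound.

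The gap is the very last step: ``being continuous and identically zero on the closed set $\{a=0\}$, it is then globally Lipschitz.'' A bound on $|\nabla f|$ on $\{f>0\}$ (with $f=0$ elsewhere) controls increments of $f$ only along paths contained in $\Omega$, i.e.\ it yields Lipschitz continuity with respect to the \emph{intrinsic} (geodesic) distance of $\Omega$, not the Euclidean one. The proposition assumes nothing beyond $\Omega$ open (and, implicitly, bounded); on a non-quasiconvex domain --- say a comb-type set, where Euclidean-close points in adjacent thin corridors can only be joined inside $\Omega$ by paths of length bounded below --- this implication fails, and one can even arrange weights satisfying all hypotheses of (i) (locally constant, with different positive values on the two sides of an arbitrarily thin tooth, vanishing only deep inside the thick part of $\Omega$) whose root $a^{1/(1+\alpha)}$ is not Euclidean-Lipschitz; the $C^{1,\alpha}$ seminorm constrains only $\nabla a$, so such examples are admissible. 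This geometric difficulty is precisely what the paper's structure avoids: arguing by contradiction, a failure of the $\SP^{1+\alpha}$ inequality is first shown to concentrate at an interior zero $x_{0}$ of $a$, and Lipschitz continuity of $a^{1/(1+\alpha)}$ is only ever used on a small ball $B(x_{0},r)\subset\Omega$, where segments between points stay inside the domain. Your proof is repairable without that scheme, because the target inequality $a(x)\leq C\left(a(y)+|x-y|^{1+\alpha}\right)$ is genuinely weaker than Lipschitz continuity of the root: if the segment $[x,y]$ lies in $\Omega$, integrate your gradient bound along it (splitting at the last zero of $a$) to get $a(x)^{1/(1+\alpha)}\leq a(y)^{1/(1+\alpha)}+C|x-y|$; if it exits $\Omega$, then $\mathrm{dist}(y,\partial\Omega)\leq|x-y|$, so either $|x-y|\geq\delta$ and $a(x)\leq\|a\|_{\infty}\delta^{-1-\alpha}|x-y|^{1+\alpha}$, or $y$ lies in your boundary layer and $a(x)\leq\|a\|_{\infty}c_{0}^{-1}a(y)$. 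As written, however, this step is unjustified and the proof of (i) is incomplete.
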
}

Our main result yields that for any $q$ and $p$ the Lavrentiev's phenomenon is absent provided that close to the phase transition $a$ is decaying not slower than a power function with an exponent $\frac{1}{q-p}$. We also claim that this range is sharp, that is if the decay rate is slower and $p$ and $q$ meet the dimensional threshold, then the gap occurs. 
\begin{theo}
    \label{theo:main} Suppose  $1<p < q<\infty$, $\vk>0$, $a:\Omega\to\rp$, $\cF$ is given by \eqref{eq:defF}, and $W(\Omega)$ is defined in~\eqref{eq:Wdef}. Then the following claims hold true. \begin{enumerate}
        \item[(i)] If $q\leq p+\vk$, $\Omega$ is a Lipschitz domain, $a\in \SP^\vk(\Omega)$, and $u_0 $ satisfies $\cF[u_0]<\infty$, then \[ 
\inf_{v\in u_0+W(\Omega)}\cF[v]= \inf_{w\in u_0+C_c^\infty(\Omega)}\cF[w]\,,\] i.e., there is no Lavrentiev's phenomenon between $W(\Omega)$ and $C_c^\infty(\Omega)$.
        \item[(ii)] If $p<n<n+\vk<q$, then there exist a Lipschitz domain $\Omega$, $a\in \SP^\vk(\Omega)$ and $u_0 $ satisfying $\cF[u_0]<\infty$, such that  \[
\inf_{v\in u_0+W(\Omega)}\cF[v]<\inf_{w\in u_0+C_c^\infty(\Omega)}\cF[w]\,,\] i.e., the Lavrentiev's phenomenon occur between $W(\Omega)$ and $C_c^\infty(\Omega)$.
    \end{enumerate}
    \vspace{0.5em}
\end{theo}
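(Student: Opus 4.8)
Since $C_c^\infty(\Omega)\subset W(\Omega)$, the inequality $\inf_{u_0+W}\cF\le\inf_{u_0+C_c^\infty}\cF$ is automatic, and for (i) it suffices to build, for each $v=u_0+\psi$ with $\psi\in W(\Omega)$, functions $\phi_j\in C_c^\infty(\Omega)$ with $\limsup_j\cF[u_0+\phi_j]\le\cF[v]$; the matching lower bound is irrelevant to the equality of infima. I would obtain such a recovery sequence by mollifying $\psi$, preceded by a truncation of its large gradients, and would fix the boundary values using that $\Omega$ is Lipschitz. The point of the truncation is to reach $W^{1,q}(\Omega)$: once $\psi\in W^{1,q}_0(\Omega)$, a slight inward dilation followed by mollification produces $\phi_j\in C_c^\infty$ with $\nabla\phi_j\to\nabla\psi$ in $L^q$, and since $a$ is bounded, dominated convergence gives $\int_\Omega a|\nabla(u_0+\phi_j)|^q\to\int_\Omega a|\nabla(u_0+\psi)|^q$ with sharp constant $1$, the $p$-part converging by $W^{1,p}$-convergence.

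The hypotheses $a\in\SP^\vk$ and $q\le p+\vk$ enter through one scaling relation. Writing $\psi_\ve:=\rho_\ve\ast\psi$ for an even kernel $\rho_\ve$, the natural gradient threshold is $t_\ve:=\ve^{-\vk/(q-p)}$, for which $\ve^\vk t_\ve^{\,q-p}=1$ and $t_\ve\to\infty$. Jensen's inequality and Fubini give $\int_\Omega a|\nabla\psi_\ve|^q\le\int_\Omega(\rho_\ve\ast a)\,|\nabla\psi|^q$, and \eqref{new-condition} yields $(\rho_\ve\ast a)(y)\le C\bigl(a(y)+\ve^\vk\bigr)$. On the sublevel set $\{|\nabla\psi|\le t_\ve\}$ the excess weight $\ve^\vk|\nabla\psi|^q$ is dominated by $|\nabla\psi|^p\in L^1$ and tends to $0$ pointwise, hence vanishes in the limit by dominated convergence, and over the superlevel set the term $a(y)|\nabla\psi|^q$ is a vanishing tail of the finite integral $\int_\Omega a|\nabla\psi|^q$. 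The only surviving term is $\ve^\vk\int_{\{|\nabla\psi|>t_\ve\}}|\nabla\psi|^q$.

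This last term is the principal obstacle: for $\psi\in W\setminus W^{1,q}$ one has $\ve^\vk|\nabla\psi|^q>|\nabla\psi|^p$ on $\{|\nabla\psi|>t_\ve\}$, so it need not vanish, and one must genuinely discard the large gradients by truncation and show the discarded weighted $q$-energy is negligible. Away from the zero set of $a$ this is harmless, since there $a$ is bounded below and $W$ agrees locally with $W^{1,q}$; the delicate region is a neighbourhood of $\{a=0\}$, where \eqref{new-condition} forces the decay $a(x)\lesssim\mathrm{dist}(x,\{a=0\})^{\vk}$. The crux is to trade this surplus against the $p$-energy using exactly that decay together with $q\le p+\vk$, so that the limiting constant is $1$ and not the class constant $C$ of $\SP^\vk$; this is precisely the situation covered by the modular-density machinery of \cite{Bor-Chl,C-b}, whose admissibility hypothesis is met thanks to the relation $\ve^\vk t_\ve^{\,q-p}=1$.

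For (ii) I would transplant the Zhikov--Esposito--Leonetti--Mingione counterexample of \cite{eslemi}, which already exhibits the gap in the range $p<n<n+\vk<q$ for a weight vanishing like $\mathrm{dist}(\cdot,\{a=0\})^{\vk}$---for instance $a(x)=|x_n|^{\vk}$ on a cube meeting $\{x_n=0\}$---together with a singular datum $u_0$ whose singular set lies in $\{a=0\}$. Admissibility $\cF[u_0]<\infty$ uses $p<n$ for the $p$-energy and the vanishing of $a$ along the singular set for the weighted $q$-energy, while the strict inequality for smooth competitors is a capacitary lower bound in which $q>n+\vk$ forces every $\phi\in C_c^\infty$ to spend a definite amount of weighted $q$-energy resolving the singularity inside $\{a>0\}$. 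The genuinely new point is that $a=|x_n|^{\vk}\in\SP^\vk(\rn)$ by \eqref{new-condition}, from $|x_n|\le|y_n|+|x-y|$, even when $\vk>1$---a regime where this weight is no longer $C^{0,\vk}$ and the Hölder cap $\vk\le1$ would forbid it. I therefore expect the main task in (ii) to be checking that the lower bound of \cite{eslemi} invokes the weight only through its $\SP^\vk$-decay, so that it persists verbatim for every $\vk\in(0,\infty)$.
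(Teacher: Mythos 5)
For part \textit{(i)}, your overall strategy (recovery sequences via mollification, Jensen plus Fubini, and the estimate $(\rho_\ve\ast a)(y)\le C(a(y)+\ve^{\vk})$ coming from \eqref{new-condition}) matches the paper's, but the step you yourself call ``the principal obstacle'' is left genuinely open, and deferring it to the cited ``modular-density machinery'' does not close it, because that machinery proceeds differently from your outline. The paper never truncates gradients: it truncates \emph{function values} (Lemma~\ref{lem:trunc}: $T_k\vp\to\vp$ in $W(\Omega)$), and then uses that for bounded $v$ the shrunken mollification satisfies $\|\nabla S_\delta v\|_{L^\infty}\le\delta^{-1}\|v\|_{L^\infty}\|\nabla\rho\|_{L^1}$ (Lemma~\ref{lem:dec1}). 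Consequently $\delta^{\vk}|\nabla S_\delta v|^{q-p}\le C\,\delta^{\vk-(q-p)}\le C$ precisely when $\vk\ge q-p$ (this is \eqref{eq:jan8}); your superlevel set $\{|\nabla\cdot|>t_\ve\}$ is thereby avoided altogether, and $S_\delta(T_k\vp)\in C_c^\infty(\Omega)$ is already the recovery sequence. Your route instead requires a Lipschitz-truncation step whose added weighted $q$-energy you never control; note that such a step cannot be harmless in general, since by part \textit{(ii)} the conclusion fails for $q>n+\vk$, and your outline never exhibits the mechanism by which $q\le p+\vk$ enters the truncation. Finally, the constant issue you flag ($C$ versus $1$) is resolved in the paper not by sharpening the pointwise bound \eqref{eq:jan12}, which does carry the constant $C$, but by using that bound only for uniform integrability and then invoking the generalized Vitali theorem \eqref{eq:genVit}: uniform integrability plus convergence in measure gives modular convergence, hence exact convergence of the energies. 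A direct $\limsup$ estimate of the kind you set up cannot produce constant $1$.

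For part \textit{(ii)}, the plan of transplanting the counterexample of~\cite{eslemi} with a weight that is only $\SP^{\vk}$ is indeed the paper's, but your concrete weight $a(x)=|x_n|^{\vk}$ cannot work: its zero set is a hyperplane, hence Lebesgue-null, so any $u_*\in W^{1,1}(\Omega)$ whose gradient vanishes on $\{a>0\}$ vanishes a.e.\ and is constant; there is then no competitor with finite $p$-energy and zero weighted $q$-energy. The Zhikov mechanism needs the cheap phase to have interior and to cross the expensive phase at a point. The paper's construction takes $\Omega=B_1$, the double cone $V$ of \eqref{eq:defCp}, the Lipschitz function $\ell(x)=\max\{x_n^2-\sum_{i<n}x_i^2,0\}\,|x|^{-1}$, and $a=\ell^{\vk}$ as in \eqref{the-weight} (so $a\in\SP^{\vk}(B_1)$ for every $\vk>0$), together with a $0$-homogeneous $u_*$ equal to $\pm1$ on the two components of $V$, so that $\nabla u_*\equiv 0$ on $\supp a$ and the $p$-energy is finite because $p<n$, and finally $u_0=t_0|x|^2u_*$. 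Moreover, the lower bound for smooth competitors is not a verbatim transplant: the weight-independent part (Lemma~\ref{lemma_3}) is quoted from~\cite{eslemi}, but the weight-dependent input is Lemma~\ref{lemma_1}, namely the finiteness of $r_1=\int_V|x|^{-q(n-1)/(q-1)}a(x)^{-1/(q-1)}\,dx$, which is exactly where $q>n+\vk$ is used; this, combined with Young's inequality and the choice \eqref{t} of $t_0$, yields the strict inequality, and none of these quantitative steps appears in your outline.
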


In turn, in order to ensure the absence of the Lavrentiev's gap for any $q$ and $p$ one can take a weight $a$ decaying like $e^{-1/t^2}$ that is faster than any polynomial, see Remark~\ref{rem:zk}, {p.~3}. We stress that no continuity or smoothness of $a$ is required. {Nonetheless, in the view of Proposition~\ref{prop:C1ainSPa+1}, if $a\in C^{1,\alpha},$ $\alpha\in(0,1],$ then Theorem~\ref{theo:main} {\it (i)} implies the absence of the Lavrentiev's phenomenon as long as $q\leq p+1+\alpha$.} Let us note that Theorem~\ref{theo:main} is formulated for a model for the sake of clarity of exposition. The same conclusion as in {\it (i)} for a general class of functional is given by Theorem~\ref{theo:lavrentiev-general}. Let us note that a modification of our method might be used to relax the required regularity of the domain, cf.~\cite{bmt}. Moreover, we point out that there are methods to get rid of the dimensional threshold between $p$ and $q$ in $(ii)$ that involve construction of fractals, see~\cite{badi} for a general method. However, we restrict ourselves to the exponents satisfying $p<n<n+\vk<q$ in order to make the proofs as concise and straightforward as possible.   \newline

 On the other hand, we prove also that the functional $\cF$ given by~\eqref{eq:defF}  enjoys interpolation properties. Namely, if one assumes additionally that $u\in C^{0,\gamma}$, $\gamma\in (0,1]$, we can relax the bound \eqref{range-no-gamma} even further. In fact, for $\gamma=1$ there is no gap for arbitrary $p$ and $q$. Moreover, to exclude the gap between $W\cap C^{0,\gamma}$ and $C_c^\infty$, it suffices to take
 \begin{equation}\label{eq:rangegamma}q\leq p+\frac{\vk}{1-\gamma},\quad \vk\in(0,\infty)
 \,.\end{equation}
 This we provide in Section~\ref{ssec:no-gap}, namely in Theorem~\ref{theo:lavrentiev-model} and its extension Theorem~\ref{theo:lavrentiev-general}. As far as double phase functionals are concerned, it fully covers the results of~\cite{bacomi-cv,ibm, bgs-arma}, but allows considering arbitrarily far $p$ and $q$, in case of any $\gamma$.  Moreover, this interpolation phenomenon has one more application. Note that in case of functional~\eqref{eq:defF}, for $p > n$ one can apply Morrey Embedding Theorem to obtain that all functions from energy space $W$ are H\"older continuous with a certain exponent. Substitution of this exponent in the range~\eqref{eq:rangegamma} allows for the condition $q \leq p + \tfrac{p\vk}{n}$, for the absence of the Lavrentiev's phenomenon between $C_c^\infty$ and $W$. This range embraces the classical range from~\cite{eslemi}, but it employs the scale $\SP^{\vk}$ instead of H\"older continuity. Therefore, our range is meaningful for $p$ and $q$ arbitrary far away. We can cover both cases $p \leq n$ and $p > n$ simultaneously, obtaining the following theorem. 
\begin{theo}
    Suppose $\Omega\subset\rn$ is a bounded Lipschitz domain, $p, q, \vk$ are such that \[\vk > 0\quad\text{ and }\quad 1 < p < q \leq p + \vk\max\left\{\tfrac pn, 1\right\}\,,\]  $a:\Omega\to\rp$ is such that $a \in \SP^\vk(\Omega)$, $\cF$ is given by \eqref{eq:defF}, and $W(\Omega)$ is defined in~\eqref{eq:Wdef}. Then for any $u_0$ such that $\cF[u_0] < \infty$, it holds that
    \begin{equation}
        \inf_{v\in u_0+W(\Omega)}\cF[v]= \inf_{w\in u_0+C_c^\infty(\Omega)}\cF[w]\,,
    \end{equation}
    i.e., there is no Lavrentiev's phenomenon between $W(\Omega)$ and $C_c^\infty(\Omega)$.
\end{theo}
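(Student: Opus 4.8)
The plan is to obtain this statement as a corollary of the two results already at our disposal, Theorem~\ref{theo:main} \textit{(i)} and the interpolation result Theorem~\ref{theo:lavrentiev-model}, by splitting the argument according to whether $p\leq n$ or $p>n$ and carefully tracking the factor $\max\{p/n,1\}$.

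In the regime $p\leq n$ one has $\max\{p/n,1\}=1$, so the hypothesis reduces to $q\leq p+\vk$. This is precisely the assumption of Theorem~\ref{theo:main} \textit{(i)}, which, on the bounded Lipschitz domain $\Omega$ with $a\in\SP^\vk(\Omega)$ and $\cF[u_0]<\infty$, already delivers the equality of infima, so nothing further is required in this branch. I would deliberately let it absorb the borderline $p=n$ as well, since the Morrey exponent exploited below degenerates exactly there.

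The content of the statement lies in the case $p>n$, where $\max\{p/n,1\}=p/n$ and the admissible range reads $q\leq p+\tfrac{p\vk}{n}$. The mechanism is that in this regime Hölder regularity comes for free on the perturbation space: because $M(x,t)\geq t^{p}$, every $\vp\in W(\Omega)$ satisfies $\int_\Omega|\nabla\vp|^p\,dx<\infty$ and hence belongs to $W^{1,p}_0(\Omega)$, so the Morrey embedding on the bounded Lipschitz domain $\Omega$ places it in $C^{0,\gamma}(\overline\Omega)$ with $\gamma:=1-\tfrac{n}{p}\in(0,1)$. Consequently $W(\Omega)\subset C^{0,\gamma}(\overline\Omega)$, so that $W(\Omega)\cap C^{0,\gamma}(\overline\Omega)=W(\Omega)$ and restricting the admissible perturbations to Hölder continuous ones costs nothing. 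It then remains to match exponents: with this $\gamma$ one has $1-\gamma=\tfrac{n}{p}$, whence $\tfrac{\vk}{1-\gamma}=\tfrac{p\vk}{n}$, so the hypothesis $q\leq p+\tfrac{p\vk}{n}$ is exactly the interpolation range~\eqref{eq:rangegamma} for the present value of $\gamma$. Theorem~\ref{theo:lavrentiev-model} therefore applies and excludes the gap between $W(\Omega)\cap C^{0,\gamma}(\overline\Omega)$ and $C_c^\infty(\Omega)$; by the identification $W(\Omega)\cap C^{0,\gamma}(\overline\Omega)=W(\Omega)$ this is precisely the claimed equality of infima.

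Since the two branches exhaust all values of $p$, the proof reduces entirely to the cited theorems together with the classical Morrey embedding, so that no new estimate on $\cF$ or on the class $\SP^\vk$ is needed. The only point I would check with care is the bookkeeping at the junction $p=n$: it is essential to route that borderline through Theorem~\ref{theo:main} \textit{(i)}, not through the Morrey argument, where the exponent $\gamma=1-n/p$ degenerates to $0$ and the interpolation range~\eqref{eq:rangegamma} loses its meaning. I would also verify that Theorem~\ref{theo:lavrentiev-model} indeed takes the \emph{perturbation} $v-u_0\in W(\Omega)$ — rather than the full competitor $v$ — as the object required to be Hölder continuous, which is what makes the automatic regularity $W(\Omega)\subset C^{0,\gamma}(\overline\Omega)$ sufficient without imposing any continuity assumption on the datum $u_0$.
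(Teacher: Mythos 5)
Your proposal is correct and is essentially the paper's own argument: the paragraph preceding the theorem sketches exactly this two-case split, handling $p\leq n$ via the range~\eqref{range-no-gamma} (Theorem~\ref{theo:main}~\textit{(i)}) and $p>n$ via the Morrey embedding with $\gamma=1-n/p$ plugged into the interpolation range~\eqref{eq:rangegamma} (Theorem~\ref{theo:lavrentiev-model}~\textit{(ii)}). Your additional care at the borderline $p=n$ and the observation that the Hölder requirement falls on the perturbation $v-u_0$, so that $W(\Omega)\cap C^{0,\gamma}(\Omega)=W(\Omega)$ suffices without any assumption on $u_0$, are exactly the right bookkeeping.
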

One can ask whether the preeminent regularity results for double phase functionals may be proven within the class $\SP^\vk$. Let us point out that assumptions {\it (A1)} and {\it (A1-n)}, studied in \cite{hahab} for analysis in generalized Orlicz spaces, specified for the double phase energy for $a\in \SP^\vk(\Omega)$, $\vk>0$ read $q\leq p+p\vk/n$ and  $q\leq p+\vk$, respectively. In turn, in \cite{hahato} it is proven that quasiminimizers of $\cF$ are H\"older continuous provided {\it (A1)} and {\it (A1-n)} hold, which 
 means $q\leq p+\vk\min\{p/n,1\}$ for $\vk>0$. On the other hand, in \cite{hhl} it is proven that $\omega$-minimizers of $\cF$ are H\"older continuous provided {\it (A1)} or $u$ is a priori bounded and {\it (A1-n)} holds. Moreover, for the double phase functional $\cF$ with $a\in\SP^\vk(\Omega)$ condition {\it (VA1)} from~\cite{ho1,ho2} is also equivalent to $q/p\leq 1+\vk/n$, $\vk>0$. Consequently, $C^{0,\beta}$ and $C^{1,\beta}$-regularity of local minimizers from these papers hold for $a\in\SP^\vk(\Omega)$ with $q/p\leq 1+\vk/n$, $\vk>0$. It would be interesting to extend the 
 results of~\cite{bacomi-cv} to cover  $a\in \SP^\vk(\Omega)$ for all $\vk>0$ and  $q\leq p+\vk$ (if a priori $u\in L^\infty$) or $q\leq p+\vk/(1-\gamma)$ (if a priori $u\in C^{0,\gamma}$).  Note that the $C^{0,\beta}$ and $C^{1,\beta}$-regularity of local minimizers is also the topic of \cite{BaaBy}, where  the functionals are of Orlicz multi phase growth. Nonetheless, the assumption made therein that the weights $a_i\in C^{0,\omega_i}$ for controlled moduli of continuity $\omega_i$ which are concave does not allow for studying Orlicz phases with growths arbitrarily far, which is allowed for counterparts of $\SP^\vk$ for all $\vk>0$ or under {\it (A1)/(A1-n)}. See Section~\ref{sec:gen} for possible extension of $\SP^\vk$ for the Orlicz multi phase case. The double phase functionals with the integrand depending on $(x,u,\nabla u)$ (cf.~\eqref{cG-def}) embracing both of the mentioned contributions allowing for phases with growth arbitrarily far apart and involving the weights decaying like the dot-dashed one in Figure~\ref{fig:fignn} are still calling for the local regularity theory under the sharp regime. \newline 

Let us briefly summarize our methods. Unlike~\cite{Lukas,Bousquet-Pisa,filomena}, we do not analyse the behaviour of minimizers, but we study the approximation properties of a relevant function space. In this regard, we first establish the density of smooth functions in the double phase version of the Sobolev space (Theorem~\ref{theo:approx}). 
In related investigations, employing convolution-based approximation is a common technique, see e.g.~\cite{yags, Bor-Chl, ibm, bgs-arma, C-b,BaaBy,eslemi,eslev,hahab} for various variants. We enclose here -- for the sake of completeness -- concise arguments shortening the reasoning of~\cite{bgs-arma} and its anisotropic extension~\cite{ibm} making use of properties of convolution from Lemma~\ref{lem:dec2}. Let us recall how broad is the class $\SP^\vk$, as shown in Figure~\ref{fig:fignn} and in Remark~\ref{rem:zk}. Note that choosing arbitrary $p$ and $q$, one can easily see a power of $\SP^\vk$-scale for all $\vk>0$ in the proof of Theorem~\ref{theo:approx} (precisely in~\eqref{eq:jan7}). This result of ours is not more powerful than~\cite[Theorem~2]{ibm}, but it gives the true feeling that there is no reason for $p$ and $q$ to be close if only one can adjust the decay of the weight to compensate it. The absence of the Lavrentiev's phenomenon, stated in Theorem~\ref{theo:lavrentiev-general}, is a~consequence of the density of smooth functions  via the ideas inspired by~\cite{bgs-arma,ibm} applying the Vitali convergence theorem. The same method can be applied to a broad family of functionals, see Section~\ref{sec:gen} for several examples. The sharpness of the result on the absence of the Lavrentiev's phenomenon  of Theorem~\ref{theo:main} is confirmed by a counterexample we provide in Section~\ref{sec:example}. We indicate a domain, a boundary condition, and weight $a\in\SP^\vk$ for $\vk$ outside the good range for the approximation result (Theorem~\ref{theo:approx}), for which the infima of $\cF$ differ. The method is inspired by the two-dimensional checkerboard constructions of Zhikov~\cite{zh-86,zh} and its extension in~\cite{eslemi}, but requires essentially more delicate arguments.   In detail, we modify a weight $a \in C^{0,\alpha}$ from~\cite{eslemi} to allow $a^{\frac{1}{\vk}}$ being comparable to a Lipschitz function, so that $a \in \SP^\vk$. See definition of weight $a$ in~\eqref{the-weight} and its property of Lemma~\ref{lemma_1}. The small change is surprisingly powerful and justifies the use of $\SP^\vk$-scale for variational problems involving double phase functionals with arbitrarily far powers.  \newline
  
{\bf Organization of the paper}. In Section~\ref{sec:prelim}, we provide information on the notation and basic tools used in the proofs of our results. Section~\ref{sec:approximation-and-no-gap} is devoted to proofs of results concerning density of smooth functions and the absence of the Lavrentiev's phenomenon, while in Section~\ref{sec:example}, sharpness of these results is discussed. Finally, in Section~\ref{sec:gen}, we comment on generalizations of our results allowing considering types of functionals other than~\eqref{eq:defF}.

\section{Preliminaries}\label{sec:prelim}

We denote by $B_r(x)$ a ball centred in $x$, with radius $r$. 

Given a set $U \subseteq \rn$, $\gamma \in (0, 1]$, and the function $U$, we denote
\begin{equation}
[f]_{0, \gamma} \coloneqq \sup_{x, y \in U, x \neq y} \frac{|f(x) - f(y)|}{|x-y|}\,,
\end{equation}
which is the H\"older seminorm of a function $f$. The set $U$ will be always clear from context.

We say that two real functions $f, g$ are comparable, if there exists a constant $c > 0$ such that $f \leq g \leq cf$. We moreover say that the function $f : [0, \infty) \to [0, \infty)$ satisfies $\Delta_2$-condition if there exists a constant $c > 0$ such that $f(2t) \leq cf(t)$, for any $t$. We denote such situation by $f \in \Delta_2$. 

By $\cH^{n-1}$, we denote classical Hausdorff measure of dimension $n - 1$, defined on $\rn$.\newline

Let us introduce some basic facts concerning spaces of Musielak--Orlicz type~\cite{IC-pocket, C-b, hahab}. With the function $M : \Omega \times \rp \to \R$, defined by \[M(x, t) = t^p + a(x)t^q\qquad \text{for }\ p, q > 1\,,\ \ 0 \leq a \in L^{\infty}\,,\] we can define the corresponding Musielak--Orlicz space by
\begin{equation*}
    L_M(\Omega) = \left\{ \xi : \Omega \to \rn \text{ measurable and such that } \int_{\Omega} M(x, |\xi(x)|)\,dx < \infty \right\}\,
\end{equation*}equipped with the Luxemburg norm
$$
\|\xi\|_{L_{M}(\Omega)}:=\inf \left\{\lambda > 0: \int_{\Omega}M \left(x,\frac{\xi(x)}{\lambda}\right)dx\leq 1 \right\}.
$$
Related Sobolev space $W(\Omega)$, defined in~\eqref{eq:Wdef}, is considered with a norm\[\|v\|_{W(\Omega)}:=\|v\|_{L^1(\Omega)}+\|\nabla v\|_{L_M(\Omega)}\,.\] 

We say that a sequence $(\xi_k)_k$ converges to $\xi$ modularly in $L_M(\Omega)$, if
\begin{equation}
    \int_{\Omega} M(x, |\xi_k - \xi|)\,dx \xrightarrow[]{k \to \infty} 0\,,
\end{equation}
and we denote it by $\xi_k \xrightarrow[k \to \infty]{M} \xi$. We mention the Generalized Vitali Convergence Theorem from the~\cite[Theorem 3.4.4]{C-b}, stating that
\begin{align}\label{eq:genVit}
    \xi_k \to \xi \text{ modularly } \iff &\text{ the family } \{M(\cdot, |\xi_k(\cdot)|)\}_{k} \text{ is uniformly integrable}\nonumber\\
    &\text{ and $(\xi_k)_k$ converges in measure to $\xi$.} 
\end{align}
Let us recall the space $W(\Omega)$ defined in~\eqref{eq:Wdef}. By the choice of $M$, it is equivalent to say that the sequence $(u_k)_k \subset W(\Omega)$ converges to $u \in W(\Omega)$ in the strong topology of $W(\Omega)$ and that
\begin{equation}\label{modular-convergence}
    u_k \xrightarrow[k \to \infty]{L^1} u \text{ in $L^1(\Omega)$} \quad \text{and} \quad \nabla u_k \xrightarrow[k \to \infty]{M} \nabla u \text{ modularly.}
\end{equation}
Let us mention a simple lemma, following from the Lebesgue Dominated Convergence Theorem.
\begin{lem}\label{lem:trunc} If $T_k(x)=\min\{k,\max\{-k,x\}\}$ for $k>0$ and $x\in\R$, $M(x, t) = t^p + a(x)t^q$, $\vp \in W(\Omega)$, then for $k\to\infty$ we have $T_k \vp\to\vp$ in $W(\Omega)$.
\end{lem}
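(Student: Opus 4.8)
The plan is to prove that the truncations $T_k\vp$ converge to $\vp$ in the norm of $W(\Omega)$, which by the equivalence stated in~\eqref{modular-convergence} amounts to showing two things: that $T_k\vp\to\vp$ in $L^1(\Omega)$, and that $\nabla(T_k\vp)\to\nabla\vp$ modularly in $L_M(\Omega)$, i.e.\ $\int_\Omega M(x,|\nabla(T_k\vp)-\nabla\vp|)\,dx\to 0$. The $L^1$ convergence is immediate since $|T_k\vp|\le|\vp|$ pointwise, $\vp\in L^1(\Omega)$, and $T_k\vp\to\vp$ pointwise a.e., so the Lebesgue Dominated Convergence Theorem applies directly.

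For the modular convergence of the gradients, first I would recall the standard fact that for $\vp\in W^{1,1}_0(\Omega)$ the chain rule for truncations gives $\nabla(T_k\vp)=\nabla\vp\cdot\mathds{1}_{\{|\vp|<k\}}$ almost everywhere. Consequently
\[
\nabla(T_k\vp)(x)-\nabla\vp(x)=-\nabla\vp(x)\,\mathds{1}_{\{|\vp(x)|\ge k\}}\,,
\]
so that the integrand becomes
\[
M\bigl(x,|\nabla(T_k\vp)-\nabla\vp|\bigr)=M\bigl(x,|\nabla\vp(x)|\bigr)\,\mathds{1}_{\{|\vp(x)|\ge k\}}\,.
\]
The point is that this is pointwise dominated by the fixed function $x\mapsto M(x,|\nabla\vp(x)|)$, which is integrable precisely because $\vp\in W(\Omega)$ by the definition~\eqref{eq:Wdef}. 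Moreover, since $\vp\in L^1(\Omega)$ is finite a.e., the set $\{|\vp|\ge k\}$ shrinks to a null set as $k\to\infty$, hence the integrand tends to $0$ pointwise a.e.

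I would then conclude by invoking the Lebesgue Dominated Convergence Theorem once more: the sequence of nonnegative integrands $M(x,|\nabla\vp(x)|)\,\mathds{1}_{\{|\vp(x)|\ge k\}}$ converges to $0$ a.e.\ and is dominated by the integrable envelope $M(\cdot,|\nabla\vp|)$, so its integral vanishes in the limit. This establishes the modular convergence, and together with the $L^1$ convergence yields $T_k\vp\to\vp$ in $W(\Omega)$ by~\eqref{modular-convergence}.

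I do not expect a serious obstacle here, as the statement is essentially a routine application of dominated convergence; the only point that requires a moment of care is justifying the chain-rule identity $\nabla(T_k\vp)=\nabla\vp\,\mathds{1}_{\{|\vp|<k\}}$ for Sobolev functions and thereby obtaining the clean pointwise domination, which is what makes the single integrable majorant $M(\cdot,|\nabla\vp|)$ available and reduces everything to the shrinking of the superlevel sets $\{|\vp|\ge k\}$.
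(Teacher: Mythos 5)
Your proof is correct and takes essentially the same approach as the paper, which simply remarks that the lemma follows from the Lebesgue Dominated Convergence Theorem and gives no further detail. Your write-up supplies exactly the standard ingredients that remark presupposes: the truncation chain rule $\nabla(T_k\vp)=\nabla\vp\,\mathds{1}_{\{|\vp|<k\}}$, the resulting pointwise majorant $M(\cdot,|\nabla\vp|)\in L^1(\Omega)$, and the equivalence~\eqref{modular-convergence} (valid here since $M\in\Delta_2$ because $a\in L^\infty$), applied via two uses of dominated convergence.
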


We introduce the approximation method by convolution with shrinking. This method is of use in many papers concerning the absence of the Lavrentiev's phenomenon and density of smooth functions in Musielak--Orlicz--Sobolev spaces, see~\cite{yags, Bor-Chl, ibm, bgs-arma}.

Let us fix $n, m \in \N$ and let $U \subset \rn$ be a bounded star-shaped domain with respect to a ball $B(x_0, R)$. Define $\kappa_{\delta} = 1 - \frac{\delta}{R}$. Moreover, let $\rho_{\delta}$ be a standard regularizing kernel on $\rn$, that is $\rho_{\delta}(x) = \rho(x/\delta)/\delta^n$, where $\rho \in C^\infty(\rn)$, $\mathrm{supp}\,\rho \Subset B(0, 1)$ and $\int_{\rn} \rho (x)\,dx = 1$, $\rho (x) = \rho (-x)$, such that $0\leq \rho\leq 1$. Then for any measurable function $v : \rn \to \mathbb{R}^m$, we define the function $S_{\delta}v : \rn \to \mathbb{R}^m$ by
\begin{equation}\label{Sdxi}
    S_{\delta}v(x) := 
 \int_{U} \rho_\delta( x-y)v\left(x_0 + \frac{y - x_0}{\kappa_{\delta}}\right)\,dy = \int_{B_{\delta}(0)} \rho_{\delta}(y)v\left(x_0 + \frac{x-y-x_0}{\kappa_{\delta}}\right)\,dy\,.
\end{equation}
By direct computations, one can show that $S_{\delta}v$ has a compact support in $U$ for $\delta \in (0, R/4)$. Moreover, we observe that for $v \in W(U)$ it holds that
\begin{equation}\label{eq:nablaout}
    \nabla S_{\delta}v = \tfrac{1}{\kappa_{\delta}} S_{\delta}(\nabla v)\,.
\end{equation}
We introduce other useful properties of this approximation in the following lemmas.
\begin{lem}[Lemma~3.1 in~\cite{ibm}]\label{lem:dec2}
    If $v \in L^1(U)$, then $S_{\delta}v$ converges to $v$ in $L^1(U)$, and so in measure, as $\delta \to 0$. 
\end{lem}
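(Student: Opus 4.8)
The plan is to recognize that $S_\delta$ factors as a mollification composed with a dilation, and then to control the two effects separately. Extending $v$ by zero to all of $\rn$ (this does not change $\|v\|_{L^1(U)}$ and only helps, since $\|S_\delta v - v\|_{L^1(U)}\leq \|S_\delta v-v\|_{L^1(\rn)}$), I would introduce the dilation operator
\[
    D_\delta v(z) := v\!\left(x_0 + \tfrac{z - x_0}{\kappa_\delta}\right),
\]
and observe from the second expression in~\eqref{Sdxi}, after extending the domain of integration from $B_\delta(0)$ to $\rn$ (legitimate because $\supp \rho_\delta\subset B_\delta(0)$ and $D_\delta v$ is supported in the dilated domain), that $S_\delta v = \rho_\delta * (D_\delta v)$ is an ordinary convolution on $\rn$. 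With this identity the triangle inequality gives
\[
    \|S_\delta v - v\|_{L^1(\rn)} \leq \|\rho_\delta * (D_\delta v - v)\|_{L^1(\rn)} + \|\rho_\delta * v - v\|_{L^1(\rn)}.
\]

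First I would dispose of the second term: since $v \in L^1(\rn)$ is fixed and $(\rho_\delta)_\delta$ is a standard approximate identity, $\rho_\delta * v \to v$ in $L^1(\rn)$ as $\delta \to 0$ by the classical mollification property. For the first term, Young's convolution inequality together with $\|\rho_\delta\|_{L^1(\rn)} = 1$ yields
\[
    \|\rho_\delta * (D_\delta v - v)\|_{L^1(\rn)} \leq \|D_\delta v - v\|_{L^1(\rn)},
\]
so it remains to show that the dilation converges to the identity, namely $\|D_\delta v - v\|_{L^1(\rn)} \to 0$. The point is that $\kappa_\delta = 1 - \delta/R \to 1$ as $\delta \to 0$, so $D_\delta$ approaches the identity map.

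To prove $D_\delta v \to v$ in $L^1$, I would run a density argument. A change of variables $z \mapsto x_0 + \kappa_\delta(z - x_0)$ shows $\|D_\delta v\|_{L^1(\rn)} = \kappa_\delta^n \|v\|_{L^1(\rn)}$, so the operators $D_\delta$ are uniformly bounded on $L^1$. For $g \in C_c(\rn)$ the convergence $D_\delta g \to g$ holds uniformly and with supports contained in a fixed compact set, hence in $L^1$. Approximating an arbitrary $v \in L^1(\rn)$ by such $g$ and splitting $\|D_\delta v - v\|_{L^1} \leq \|D_\delta(v - g)\|_{L^1} + \|D_\delta g - g\|_{L^1} + \|g - v\|_{L^1}$, the uniform bound controls the first summand by $\|v-g\|_{L^1}$, the middle term vanishes as $\delta \to 0$, and the last is small by the choice of $g$; letting first $\delta \to 0$ and then $g \to v$ gives the claim. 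Combining the two estimates yields $S_\delta v \to v$ in $L^1(U)$.

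Finally, convergence in measure is immediate: since $U$ is bounded it has finite Lebesgue measure, and $L^1(U)$-convergence implies convergence in measure by Chebyshev's inequality. The only mildly delicate point of the whole argument is that the dilation and the mollification both depend on $\delta$ simultaneously; this coupling is precisely what Young's inequality resolves, reducing the dilation contribution to the $\delta$-independent factor $\|\rho_\delta\|_{L^1(\rn)} = 1$ times the pure dilation error $\|D_\delta v - v\|_{L^1}$.
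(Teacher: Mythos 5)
Your proof is correct. There is, however, nothing in this paper to compare it against: the lemma is imported verbatim from the cited reference (it is Lemma~3.1 there) and no proof is given here, so your argument stands on its own. The route you take — writing $S_\delta v = \rho_\delta * (D_\delta v)$ for the zero extension of $v$, splitting off the pure mollification error, and reducing the remaining term via Young's inequality to the dilation error $\|D_\delta v - v\|_{L^1}$, which is then handled by uniform boundedness of $D_\delta$ plus density of $C_c(\rn)$ — is the standard and expected one for this statement. The single delicate point is the identity $S_\delta v = \rho_\delta * (D_\delta v)$ itself: replacing the integral over $U$ (respectively over $B_\delta(0)$) by an integral over $\rn$ requires that the zero-extended $D_\delta v$ have no mass outside $U$, i.e.\ that $x_0 + \kappa_\delta(U - x_0) \subset U$; this is exactly where star-shapedness of $U$ with respect to (a ball around) $x_0$ enters, and your phrase ``$D_\delta v$ is supported in the dilated domain'' uses it implicitly. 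Making that inclusion explicit would close the only small gap in an otherwise complete argument.
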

%
%
\begin{lem}[Lemma~3.3 in~\cite{ibm}]\label{lem:dec1}
Let $v \in W_0^{1, 1}(U)$, where ${U}$ is a star-shaped domain with respect to a ball $B(x_0, R)$. It holds that
\begin{itemize}[--]
    \item if $v \in L^{\infty}({U})$, then
    \begin{equation}\label{eq:inqLinf}
        \|\nabla S_{\delta}(v)\|_{L^{\infty}} \leq \delta^{-1}\|v\|_{L^{\infty}}\|\nabla \rho\|_{L^1}\,;
    \end{equation}
    \item if $v \in C^{0, \gamma}({U})$, $\gamma \in (0, 1]$, then
    \begin{equation}\label{eq:inqH}
        \|\nabla S_{\delta}(v)\|_{L^{\infty}} \leq \frac{\delta^{\gamma-1}}{\kappa_{\delta}^\gamma}[v]_{{0, \gamma}}\|\nabla \rho\|_{L^1}\,.
    \end{equation}
\end{itemize}
\end{lem}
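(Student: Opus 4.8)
The plan is to differentiate the convolution directly, letting the derivative fall on the smooth mollifier $\rho_\delta$ rather than on $v$, and then to estimate the two cases separately. Starting from the first representation in~\eqref{Sdxi} and using that $\rho_\delta$ is smooth with compact support, I would differentiate under the integral sign to get
\[
\nabla S_\delta v(x) = \int_U \nabla\rho_\delta(x-y)\, v\!\left(x_0 + \tfrac{y-x_0}{\kappa_\delta}\right)dy.
\]
The only scaling facts needed are that $\nabla\rho_\delta(z) = \delta^{-n-1}(\nabla\rho)(z/\delta)$, whence a change of variables gives $\|\nabla\rho_\delta\|_{L^1} = \delta^{-1}\|\nabla\rho\|_{L^1}$, and that $\supp\nabla\rho_\delta \subset B_\delta(0)$.

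For the first bound I would simply pull the supremum of $|v|$ out of the integral:
\[
|\nabla S_\delta v(x)| \le \|v\|_{L^\infty}\int_U |\nabla\rho_\delta(x-y)|\,dy \le \|v\|_{L^\infty}\,\|\nabla\rho_\delta\|_{L^1} = \delta^{-1}\|v\|_{L^\infty}\|\nabla\rho\|_{L^1},
\]
which is uniform in $x$ and yields \eqref{eq:inqLinf}.

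For the Hölder bound the key is a cancellation. Since $v\in W_0^{1,1}(U)$ has vanishing trace, I would extend it by zero to $\rn$; the extension stays Hölder continuous with a comparable seminorm precisely because $v$ already vanishes on $\partial U$. As $\int_{\rn}\nabla\rho_\delta(x-y)\,dy = \nabla_x\!\int\rho_\delta(x-y)\,dy = 0$, I may subtract the value of the stretched function at $x$ without changing the integral, writing
\[
\nabla S_\delta v(x) = \int \nabla\rho_\delta(x-y)\Bigl[v\!\left(x_0+\tfrac{y-x_0}{\kappa_\delta}\right) - v\!\left(x_0+\tfrac{x-x_0}{\kappa_\delta}\right)\Bigr]dy.
\]
The two arguments differ by $(y-x)/\kappa_\delta$, so Hölder continuity bounds the bracket by $[v]_{0,\gamma}\,\kappa_\delta^{-\gamma}|x-y|^\gamma$; on $\supp\nabla\rho_\delta(x-\cdot)$ one has $|x-y|\le\delta$, and combining with $\|\nabla\rho_\delta\|_{L^1}=\delta^{-1}\|\nabla\rho\|_{L^1}$ produces
\[
|\nabla S_\delta v(x)| \le \frac{[v]_{0,\gamma}}{\kappa_\delta^\gamma}\,\delta^\gamma\,\|\nabla\rho_\delta\|_{L^1} = \frac{\delta^{\gamma-1}}{\kappa_\delta^\gamma}[v]_{0,\gamma}\|\nabla\rho\|_{L^1},
\]
which is exactly \eqref{eq:inqH}.

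The main obstacle is the cancellation step: it relies on the zero extension of $v$ being admissible so that $\int\nabla\rho_\delta(x-y)\,dy$ genuinely vanishes over the full range of integration, and on careful bookkeeping of the dilation $y\mapsto x_0+(y-x_0)/\kappa_\delta$, which is what produces the factor $\kappa_\delta^{-\gamma}$. Everything else is the routine $L^1$-scaling of the mollifier together with a support restriction, so the care lies entirely in justifying the mean-zero property and in tracking the stretching constant.
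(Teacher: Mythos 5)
Your proof is correct and follows exactly the canonical route behind the cited Lemma~3.3 of~\cite{ibm} (the paper itself only cites the result, giving no proof): differentiate onto the mollifier, use the scaling $\|\nabla\rho_\delta\|_{L^1}=\delta^{-1}\|\nabla\rho\|_{L^1}$ for \eqref{eq:inqLinf}, and for \eqref{eq:inqH} exploit $\int_{\rn}\nabla\rho_\delta(x-y)\,dy=0$ after extending $v$ by zero, with the dilation accounting for the factor $\kappa_\delta^{-\gamma}$. One small sharpening of your wording: since $v\in W_0^{1,1}(U)\cap C^{0,\gamma}(U)$ vanishes on $\partial U$, the zero extension preserves the H\"older seminorm \emph{exactly} (for $x\in U$, $y\notin U$ take $z\in\partial U\cap[x,y]$, so $|v(x)|\le[v]_{0,\gamma}|x-z|^{\gamma}\le[v]_{0,\gamma}|x-y|^{\gamma}$), not merely up to a constant --- this is what yields \eqref{eq:inqH} with the stated constant and no extra factor.
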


\section{Approximation and the absence of the Lavrentiev's phenomenon}\label{sec:approximation-and-no-gap}

When  $q>p>1$ no matter how far are $q$ and $p$, to ensure the approximation properties of the double phase version of the Sobolev space it suffices to control the decay of the weight close to the phase transition. In fact, it is enough to have $a^{\frac{1}{q-p}}$ to be comparable to a Lipschitz continuous function. In Section~\ref{ssec:approx} we prove the density result, which is applied in Section~\ref{ssec:no-gap} to get the absence of the Lavrentiev's phenomenon. 

\subsection{Approximation}\label{ssec:approx}
In this section, we make use of the convolution with shrinking, to establish the density of smooth functions in the energy space $W$ defined in~\eqref{eq:Wdef}. The result reads as follows.
\begin{theo}[Density of smooth functions]\label{theo:approx}
Let $\Omega$ be a bounded Lipschitz domain in $\rn$, 
$1<p<q<\infty$, $\vk>0$, and $a:\Omega\to\rp$ be such that $a \in \SP^{\vk}(\Omega)$. Then the following assertions hold true.
\begin{enumerate}[{\it (i)}]
\item  If  $\vk \geq q-p$, then for any $\vp\in W(\Omega)$  there exists a sequence $(\vp_\delta)_{\delta}\subset C_c^\infty(\Omega)$, such that  $\vp_\delta\to  \vp$ in $W(\Omega)$.
\item  Let $\gamma \in (0, 1]$. If $\vk \geq (q-p)(1-\gamma)$, then for any $\vp\in W(\Omega)\cap C^{0,\gamma}(\Omega)$  there exists a sequence $(\vp_\delta)_{\delta}\subset C_c^\infty(\Omega)$, such that  $ \vp_\delta\to  \vp$  in $W(\Omega)$.
\end{enumerate} 
Moreover, in both above cases, if $\vp\in L^\infty(\Omega)$, then there exists $c=c(\Omega)>0$, such that $\|\vp_\delta\|_{L^\infty(\Omega)}\leq c \|\vp \|_{L^\infty(\Omega)}$.
\end{theo}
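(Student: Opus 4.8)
The plan is to build the approximants by the convolution-with-shrinking operator $S_\delta$ and to prove $\vp_\delta:=S_\delta\vp\to\vp$ in $W(\Omega)$ through the characterization \eqref{modular-convergence}, i.e. $L^1$-convergence of the functions together with modular convergence of the gradients, the latter extracted from the Generalized Vitali Theorem \eqref{eq:genVit}. First come the reductions. Since a bounded Lipschitz domain is a finite union of subdomains each star-shaped with respect to a ball, I would fix a smooth partition of unity subordinate to such a cover and write $\vp=\sum_i\psi_i\vp$, so that it suffices to approximate a single piece $v:=\psi_i\vp$ supported in a star-shaped $U\subset\Omega$; because $\psi_i$ is smooth and $\vp$ bounded, $v\in W(U)\cap L^\infty(U)$ (and $v\in C^{0,\gamma}$ in case {\it (ii)}). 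In case {\it (i)}, to reach a general $\vp\in W(\Omega)$ I would first truncate: by Lemma~\ref{lem:trunc} $T_k\vp\to\vp$ in $W(\Omega)$, each $T_k\vp\in W\cap L^\infty$, and a diagonal argument then removes the $L^\infty$ assumption; in case {\it (ii)} no truncation is needed since $\vp\in C^{0,\gamma}(\Omega)$ is already bounded. On each piece $S_\delta v\in C_c^\infty(U)\subset C_c^\infty(\Omega)$, and the bound $\|S_\delta v\|_{L^\infty}\le\|v\|_{L^\infty}$ is immediate from \eqref{Sdxi}, yielding the final assertion after summing the finitely many pieces.

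Next, the easy modes of convergence. The $L^1$-convergence $S_\delta v\to v$ is Lemma~\ref{lem:dec2}; combined with \eqref{eq:nablaout} and $\kappa_\delta\to1$, applying Lemma~\ref{lem:dec2} componentwise to $\nabla v\in L^1(U)$ gives $\nabla S_\delta v\to\nabla v$ in $L^1$, hence in measure. By \eqref{eq:genVit} it then remains to prove the uniform integrability of $\{M(\cdot,|\nabla S_\delta v-\nabla v|)\}_\delta$; using the $\Delta_2$-condition and convexity of $t\mapsto M(x,t)$ this reduces to the uniform integrability of $\{|\nabla S_\delta v|^p\}_\delta$ and of $\{a|\nabla S_\delta v|^q\}_\delta$, the term carrying the fixed integrable function $\nabla v$ being harmless. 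The $p$-term I would handle by Jensen's inequality applied to the convolution in \eqref{Sdxi} followed by the affine change of variables $\zeta=x_0+\frac{x-y-x_0}{\kappa_\delta}$, which bounds $\int_E|\nabla S_\delta v|^p\le C\int_{\tilde E}|\nabla v|^p$ for a set $\tilde E$ with $|\tilde E|\le C|E|$; since $|\nabla v|^p\in L^1$ is equi-integrable, so is the family.

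The core is the uniform integrability of $a|\nabla S_\delta v|^q$, and here I would split $U$ according to the size of the weight, $U=\{a\ge\lambda\delta^\vk\}\cup\{a<\lambda\delta^\vk\}$ for a suitable $\lambda$. Writing $\zeta=\zeta(x,y)$ for the point at which $\nabla v$ is evaluated in $\nabla S_\delta v(x)=\kappa_\delta^{-1}\int\rho_\delta(y)\nabla v(\zeta)\,dy$, one has $|x-\zeta|\le c\delta$, so the defining inequality \eqref{new-condition} of $\SP^\vk$ gives both $a(x)\le C(a(\zeta)+\delta^\vk)$ and, rearranged, $a(\zeta)\ge\frac1C a(x)-c\delta^\vk$. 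On the large-weight set the latter yields $a(\zeta)\ge\frac{1}{2C}a(x)$ uniformly in the admissible $\zeta$, so Jensen to the $q$-th power followed by the change of variables controls this contribution by $C\int a|\nabla v|^q$, which is equi-integrable. On the small-weight set I would use $a<\lambda\delta^\vk$ together with $a|\nabla S_\delta v|^q\le\lambda\delta^\vk\|\nabla S_\delta v\|_{L^\infty}^{q-p}|\nabla S_\delta v|^p$ and the smoothing estimate of Lemma~\ref{lem:dec1}: in case {\it (i)}, $\|\nabla S_\delta v\|_{L^\infty}\le C\delta^{-1}$ produces the factor $\delta^{\vk-(q-p)}$, bounded precisely because $\vk\ge q-p$; in case {\it (ii)}, $\|\nabla S_\delta v\|_{L^\infty}\le C\delta^{\gamma-1}$ produces $\delta^{\vk-(1-\gamma)(q-p)}$, bounded precisely because $\vk\ge(1-\gamma)(q-p)$. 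This contribution is then dominated by $C\int_E|\nabla S_\delta v|^p$, again equi-integrable by the previous step, and combining the two sets yields the required uniform integrability.

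I expect the main obstacle to be exactly this third step: the simultaneous bookkeeping of the lower bound $a(\zeta)\gtrsim a(x)$ on the large-weight region and the smoothing estimate on the small-weight region, since it is their interaction that forces the sharp thresholds $\vk\ge q-p$ and $\vk\ge(1-\gamma)(q-p)$, with the borderline saturated. A further subtlety is that one needs genuine equi-integrability rather than a mere uniform $L^1$ bound, which is why it matters that the change of variables distorts sets only by a factor comparable to $\kappa_\delta^n$. By contrast, the reduction to star-shaped pieces along the Lipschitz boundary via a partition of unity is technical but standard, and I would treat it as routine.
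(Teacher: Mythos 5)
Your proposal is correct and follows essentially the same route as the paper's proof: truncation plus a partition of unity to reduce to star-shaped pieces, the shrinking convolution $S_\delta$, the modular/Vitali characterization of convergence in $W(\Omega)$, and the interplay of the $\SP^\vk$ inequality with the $L^\infty$-gradient bound of Lemma~\ref{lem:dec1}, which is exactly where the thresholds $\vk \geq q-p$ and $\vk \geq (q-p)(1-\gamma)$ enter. Your splitting of $\Omega$ into large- and small-weight regions and your change-of-variables transfer of equi-integrability are only cosmetic variants of the paper's single pointwise estimate \eqref{eq:jan7}--\eqref{eq:jan12}, which instead dominates $M(\cdot,|\nabla S_\delta\vp|)$ by the $L^1$-convergent (hence uniformly integrable) family $S_\delta\bigl(M(\cdot,|\nabla\vp|)\bigr)$.
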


\begin{proof}
    Let us at first notice that by Lemma~\ref{lem:trunc}, we have that $W(\Omega) \cap L^{\infty}(\Omega)$ is dense in $W(\Omega)$. Therefore, for the assertion $(i)$, it suffices to consider the density of $C_c^{\infty}(\Omega)$ in $W(\Omega) \cap L^{\infty}(\Omega)$. Let us assume that in case of $(i)$, we have $\gamma = 0$. We shall prove the claims $(i)$ and $(ii)$ simultaneously. To this aim, let us take any $\vp \in W(\Omega) \cap L^{\infty}(\Omega)$ in the case of $\gamma = 0$ and $\vp \in W(\Omega) \cap C^{0, \gamma}(\Omega)$ otherwise. \newline

    At first, let us assume that $\Omega$ is a star-shaped domain with respect to a ball centred in zero and with radius $R > 0$, that is $B(0, R)$. Recall the definition of $S_{\delta}\vp$, given in~\eqref{Sdxi}, where we take $U = \Omega$, $x_0 = 0$, and $\delta < R/4$. Our aim now is to prove that $\nabla S_{\delta}\vp$ converges to $\nabla \vp$ in $W(\Omega)$. {Due to~\eqref{modular-convergence}, it is enough to show that   $S_{\delta}\vp\to \vp$ in $L^1$ and $\nabla S_{\delta}\vp\xrightarrow{M}\nabla \vp$ modularly in $L_M$. We observe that by~\eqref{eq:nablaout} and Lemma~\ref{lem:dec2}, we have this first convergence as well as the fact that $\nabla (S_{\delta}(\vp))$ converges to $\vp$ in measure.} Therefore, by~\eqref{eq:genVit}, it suffices to prove that
    \begin{equation}\label{eq:jan6}
        \text{the family } \big\{ |\nabla(S_{\delta} \vp)(\cdot))| ^p+a(\cdot) |\nabla(S_{\delta} \vp)(\cdot))|^q \big\}_{\delta} \text{ is uniformly integrable.}
    \end{equation}
    Observe that by Lemma~\ref{lem:dec1}, for sufficiently small $\delta > 0$, there exist a constant $C_S > 0$, independent of $\delta$, such that
    \begin{equation}\label{eq:jan1}
    \|\nabla(S_{\delta}\vp)\|_{L^{\infty}} \leq C_S\delta^{\gamma - 1}\,.    \end{equation} Indeed, if $\gamma = 0$, then by using assertion~\eqref{eq:inqLinf} and the fact that $\vp \in L^{\infty}(\Omega)$, we can set $C_S \coloneqq \|\vp\|_{L^{\infty}}\|\nabla \rho\|_{L^1}$ in~\eqref{eq:jan1}. In the case of $\gamma \in (0, 1]$,~\eqref{eq:inqH} provides that $\|\nabla S_{\delta}(\vp)\|_{L^{\infty}} \leq \frac{\delta^{\gamma-1}}{\kappa_{\delta}^\gamma}[\vp]_{{0, \gamma}}\|\nabla \rho\|_{L^1}$. As $\vp \in C^{0, \gamma}(\Omega)$ and $\kappa_{\delta} \xrightarrow{\delta \to 0} 1$, we obtain inequality~\eqref{eq:jan1} with constant $C_S \coloneqq 2[\vp]_{0, \gamma}\|\nabla \rho\|_{L^1}$ for sufficiently small $\delta$. We therefore have~\eqref{eq:jan1} for all $\gamma \in [0,1]$. \newline
    
    As $a \in \SP^{\vk}$, there exists a constant $C_a > 1$ such that for any $x, y \in \Omega$ we have $a(x) \leq C_a(a(y) + |x-y|^{\vk})$. Let us take any $x, y \in \Omega, \tau > 0, \delta \in (0, 1)$ such that $|x - y| \leq \tau \delta$. We have
    \begin{align}\label{eq:jan7}
        |\nabla S_{\delta}(\vp)(x)|^p + a(x)|\nabla S_{\delta}(\vp)(x)|^q &= |\nabla S_{\delta}(\vp)(x)|^p(1 + a(x)|\nabla S_{\delta}(\vp)(x)|^{q-p})\nonumber\\
        &\leq |\nabla S_{\delta}(\vp)(x)|^p(1 + C_a(a(y) + \tau^{\vk}\delta^{\vk})|\nabla S_{\delta}(\vp)(x)|^{q-p})\nonumber\\
        &\leq C_a|\nabla S_{\delta}(\vp)(x)|^p(1 + a(y)|\nabla S_{\delta}(\vp)(x)|^{q-p} + \tau^{\vk}\delta^{\vk}|\nabla S_{\delta}(\vp)(x)|^{q-p})\,.
    \end{align}
    By the inequality~\eqref{eq:jan1}, we obtain that
    \begin{equation}\label{eq:jan8}
        \delta^{\vk}|\nabla S_{\delta}(\vp)(x)|^{q-p} \leq C_S^{q-p}\delta^{\vk}\delta^{(q-p)(\gamma - 1)} \leq C_S^{q-p}\,,
    \end{equation} 
    where in the last inequality we used that $\delta \in (0, 1)$ and $\vk + (q-p)(\gamma - 1) \geq 0$. By~\eqref{eq:jan7} and~\eqref{eq:jan8}, we have that there exists a constant $C_\tau > 0$, not depending on $\delta$, such that
    \begin{align}\label{eq:jan2}
      |\nabla S_{\delta}(\vp)(x)|^p + a(x)|\nabla S_{\delta}(\vp)(x)|^q 
        \leq C_\tau\left(|\nabla S_{\delta}(\vp)(x)|^p + \left(\inf_{z \in B_{\tau\delta}(x)}a(z)\right)|\nabla S_{\delta}(\vp)(x)|^q\right)\,.
    \end{align}
    Let us recall~\eqref{eq:nablaout}, that is $\nabla S_{\delta}(\vp) = \tfrac{1}{\kappa_{\delta}}S_{\delta}(\nabla \vp)$. By using Jensen's inequality in conjunction with the fact that $\kappa_{\delta} \geq 1/2$ for sufficiently small $\delta$, we may write
    \begin{align}\label{eq:jan3}
        |\nabla S_{\delta}(\vp)(x)|^p &= \frac{1}{\kappa_{\delta}^p}\left| \int_{B_{\delta}(0)}\rho_{\delta}(y)(\nabla \vp)((x-y)/\kappa_{\delta})\,dy \right|^p\nonumber \\ &\leq 2^p\int_{B_{\delta}(0)} \rho_{\delta}(y)|(\nabla \vp)((x-y)/\kappa_{\delta})|^p\,dy = 2^pS_{\delta}(|\nabla \vp(\cdot)|^p)(x)\,
    \end{align}
    for sufficiently small $\delta > 0$. Analogously, it holds that
    \begin{align}\label{eq:jan4}
        \left(\inf_{z \in B_{\tau\delta}(x)}a(z)\right)|\nabla S_{\delta}(\vp)(x)|^q &\leq 2^q\int_{B_{\delta}(0)}\rho_{\delta}(y)\left(\inf_{z \in B_{\tau\delta}(x)}a(z)\right)|(\nabla \vp)((x-y)/\kappa_{\delta})|^q \,dy\nonumber\\
        &\leq 2^q\int_{B_{\delta}(0)}\rho_{\delta}(y)a((x-y)/\kappa_{\delta})|(\nabla \vp)((x-y)/\kappa_{\delta})|^q \,dy= 2^qS_{\delta}(a(\cdot)|\nabla \vp(\cdot)|^q)(x)\,,
    \end{align}
    where $\tau$ is fixed such that for sufficiently small $\delta > 0$ we have
    \begin{equation*}
        \left| (x-y)/\kappa_{\delta} - x \right| \leq \frac{|y|}{\kappa_{\delta}} + \frac{1-\kappa_{\delta}}{\kappa_{\delta}}|x| \leq \frac{\delta}{\kappa_{\delta}} + \frac{\delta}{2R\kappa_{\delta}}(\text{diam }\Omega) \leq \tau\delta\,.
    \end{equation*}
        Observe that by~\eqref{eq:jan2} and by estimates~\eqref{eq:jan3} and~\eqref{eq:jan4}, we have
    \begin{equation}\label{eq:jan12}
        M(x, |\nabla S_{\delta}\vp(x)|) \leq 2^qC_{\tau}(S_{\delta}(|\nabla \vp(\cdot)|^p)(x) + S_{\delta}(a(\cdot)|\nabla \vp(\cdot)|^q)(x)) = 2^qC_{\tau}S_{\delta}\left(M\left(\cdot, |\nabla \vp(\cdot)|\right) \right)(x)\,.
    \end{equation}
    The fact that $\vp \in W(\Omega)$ implies that $M(\cdot, |\nabla \vp(\cdot)|) \in L^1(\Omega)$. Therefore, Lemma~\ref{lem:dec2} gives us that the sequence $\left(S_{\delta}\left(M\left(\cdot, |\nabla \vp(\cdot)|\right) \right)\right)_\delta$ converges in $L^1$. By the Vitali Convergence Theorem, it means that the family $\left\{S_{\delta}\left(M\left(\cdot, |\nabla \vp(\cdot)|\right) \right)\right\}_{\delta}$ is uniformly integrable. Using the estimate~\eqref{eq:jan12}, we deduce that the family $\{M(\cdot, |\nabla(S_{\delta}\vp)(\cdot)|)\}_{\delta }$ is uniformly integrable, which is~\eqref{eq:jan6}. Therefore, the proof is completed for $\Omega$ being a bounded star-shaped domain with respect to a ball centred in zero. \newline

    To prove the result for $\Omega$ being star-shaped with respect to a ball centred in point other than zero, one may translate the problem, obtaining the set being a star-shaped domain with respect to a ball centred in zero. Then, proceeding with the proof above and reversing translation of $\Omega$ gives the desired result. \newline
    
    Now we shall focus on the case of $\Omega$ being an arbitrary bounded Lipschitz domain. By \cite[Lemma~8.2]{C-b}, a set $\overline{\Omega}$ can be covered by a finite family of sets $\{U_i\}_{i=1}^{K}$ such that each $\Omega_i :=\Omega\cap U_i$ is a star-shaped domain with respect to some ball. Then $\Omega=\bigcup_{i=1}^{K}\Omega_i\,$.    By~\cite[Proposition 2.3, Chapter 1]{necas}, there exists the partition of unity related to the partition $\{U_i\}_{i=1}^{K}$, i.e., the family $\{\theta_i\}_{i=1}^{K}$ such that
\begin{equation*}
0\le\theta_i\le 1,\quad\theta_i\in C^\infty_c(U_i),\quad \sum_{i=1}^{K}\theta_i(x)=1\ \ \text{for}\ \ x\in\Omega\,.
\end{equation*}
By the previous paragraph for every $i = 1, 2, \dots, K$, as $\Omega_i$ is a star-shaped domain with respect to some ball, and $\theta_i\vp \in W(\Omega_i)$, there exist a sequence $(\vp_{\delta}^i)_{\delta}$ such that $\vp_{\delta}^i \xrightarrow[]{\delta \to 0} \theta_i \vp$ in $W(\Omega_i)$. Let us now consider the sequence $(I_\delta)_\delta$ defined as
\[I_{\delta} \coloneqq \sum_{i=1}^{K} \vp^i_{\delta}.\] 
We shall show that $I_{\delta} \to \vp$ in $W(\Omega)$. As we have that $\vp_{\delta}^i \to \theta_i\vp$ in $L^1$ for every $i$, we have $I_{\delta} \to \vp$ in $L^1$. It suffices to prove that $\nabla I_{\delta} \to \nabla \vp$ in $L_M(\Omega)$. Since the sequence $(\nabla  \vp_{\delta}^i)_\delta$ converges to $\nabla (\theta_i \vp)$ in measure and $\sum_{i=1}^{K} \nabla(\theta_i \vp) = \nabla \vp$, it holds that
\begin{equation}\label{eq:jan9}
    \left(\nabla I_{\delta}\right)_{\delta} 
    \to  \nabla \vp \text{ in measure.}
\end{equation}
Moreover, for any $x \in \Omega$ we have that
\begin{align}\label{eq:jan11}
    \left| \nabla I_{\delta}(x) \right|^p + a(x)\left| \nabla I_{\delta}(x) \right|^q &\leq \sum_{i=1}^{K} \left(K^{p-1}|\nabla(\vp_{\delta}^i)(x)|^p + K^{q-1}a(x)|\nabla(\vp_{\delta}^i)(x)|^q\right)\nonumber\\
    &\leq K^{q-1}\sum_{i=1}^{K}  \left(|\nabla(\vp_{\delta}^i)(x)|^p + a(x)|\nabla(\vp_{\delta}^i)(x)|^q\right)\,.
\end{align}
As for all $i = 1, 2, \dots, K$, we have that $(\vp^i_{\delta})_{\delta}$ converges in $W(\Omega_i)$, it holds that the family\\ $\{|\nabla(\vp_{\delta}^i)(\cdot)|^p + a(\cdot)|\nabla(\vp_{\delta}^i)(\cdot)|^q\}_{\delta}$ 
is uniformly integrable. Therefore, the estimate~\eqref{eq:jan11} gives us that\[\text{ the family $\quad\left\{ \left| \sum_{i=1}^{K} \nabla(\vp_{\delta}^i)(\cdot)\right|^p + a(\cdot) \left| \sum_{i=1}^{K} \nabla(\vp_{\delta}^i)(\cdot)\right|^q\right \}_{\delta}\quad$ is uniformly integrable. } \]
This together with~\eqref{eq:jan9} and~\eqref{eq:genVit}, as well as the fact that  $I_{\delta} \to \vp$ in $L^1$, gives us the result for an arbitrary bounded Lipschitz domain $\Omega$.
\end{proof}

\subsection{Absence of the Lavrentiev's phenomenon}\label{ssec:no-gap}
As a direct consequence of Theorem~\ref{theo:approx} we infer the absence of the Lavrentiev's phenomenon. We start with a simple formulation for a double phase functional~\eqref{eq:defF} reading as follows. 
\begin{theo}[Absence of the Lavrentiev's phenomenon for a model functional]\label{theo:lavrentiev-model}
 Suppose  $\Omega\subset\rn$ is  a bounded Lipschitz domain, $1<p < q<\infty$, $\vk>0$, $a:\Omega\to\rp$, $\cF$ is given by \eqref{eq:defF}, and $W(\Omega)$ is defined in~\eqref{eq:Wdef}.  Assume that $u_0$ satisfies $\cF[u_0]<\infty$ and $a \in \SP^{\vk}(\Omega)$.  Then the following assertions hold true.
    \begin{enumerate}
        \item[(i)] If  $\vk \geq q-p$, then
            \begin{equation}
        \inf_{v \in u_0 + W(\Omega)} \mathcal{F}[v] = \inf_{w \in u_0 + C_c^\infty(\Omega)} \mathcal{F}[w]\,. 
    \end{equation}
        \item[(ii)] Let $\gamma \in (0, 1]$. If $\vk \geq (q-p)(1-\gamma)$, then
            \begin{equation}
        \inf_{v \in u_0 + W(\Omega) \cap C^{0, \gamma}(\Omega)} \mathcal{F}[v] = \inf_{w \in u_0 + C_c^\infty(\Omega)} \mathcal{F}[w]\,.
    \end{equation}
    \end{enumerate}
\end{theo}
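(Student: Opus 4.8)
The plan is to derive Theorem~\ref{theo:lavrentiev-model} as a direct consequence of the density result, Theorem~\ref{theo:approx}, already established. The key observation is that the equality of infima is equivalent to a one-sided inequality: since $C_c^\infty(\Omega)\subset W(\Omega)$, and in case~(ii) additionally $C_c^\infty(\Omega)\subset W(\Omega)\cap C^{0,\gamma}(\Omega)$, we automatically have $\inf_{v\in u_0+W(\Omega)}\cF[v]\leq \inf_{w\in u_0+C_c^\infty(\Omega)}\cF[w]$ (and similarly for (ii)). Hence it suffices to prove the reverse inequality, namely that the infimum over the smaller class $u_0+C_c^\infty(\Omega)$ does not exceed the infimum over the larger energy space.

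First I would fix an arbitrary competitor $v\in u_0+W(\Omega)$ with $\cF[v]<\infty$ (if no such $v$ exists the statement is vacuous, but $u_0$ itself qualifies since $\cF[u_0]<\infty$). Writing $v=u_0+\vp$ with $\vp\in W(\Omega)$, the hypothesis $\vk\geq q-p$ lets me invoke Theorem~\ref{theo:approx}~(i) to produce a sequence $(\vp_\delta)_\delta\subset C_c^\infty(\Omega)$ with $\vp_\delta\to\vp$ in $W(\Omega)$. Then $w_\delta:=u_0+\vp_\delta\in u_0+C_c^\infty(\Omega)$ is an admissible competitor for the right-hand infimum. The central step is to pass the convergence $\vp_\delta\to\vp$ in $W(\Omega)$ through the functional, i.e.\ to show $\cF[w_\delta]\to\cF[v]$. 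By the characterization~\eqref{modular-convergence}, convergence in $W(\Omega)$ means $\nabla\vp_\delta\xrightarrow{M}\nabla\vp$ modularly, equivalently (via~\eqref{eq:genVit}) that $\{M(\cdot,|\nabla w_\delta|)\}_\delta$ is uniformly integrable and $\nabla w_\delta\to\nabla v$ in measure; passing to a subsequence converging a.e.\ and applying the generalized Vitali Convergence Theorem yields $\int_\Omega M(x,|\nabla w_\delta|)\,dx\to\int_\Omega M(x,|\nabla v|)\,dx$, which is exactly $\cF[w_\delta]\to\cF[v]$. Consequently $\inf_{w\in u_0+C_c^\infty(\Omega)}\cF[w]\leq \lim_\delta\cF[w_\delta]=\cF[v]$, and taking the infimum over all such $v$ gives the reverse inequality, proving~(i).

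For part~(ii) the argument is identical except that the competitor $v$ is now drawn from $u_0+\bigl(W(\Omega)\cap C^{0,\gamma}(\Omega)\bigr)$, so that $\vp=v-u_0\in W(\Omega)\cap C^{0,\gamma}(\Omega)$, and the weaker hypothesis $\vk\geq(q-p)(1-\gamma)$ is precisely what is needed to apply Theorem~\ref{theo:approx}~(ii) to this $\vp$. The remaining convergence-of-energies step is unchanged, since it depends only on $W(\Omega)$-convergence of the approximants and the structure of $M$, not on the Hölder regularity per se.

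The main obstacle — and the only genuinely nontrivial point — is the passage of $\cF$ through the $W(\Omega)$-convergence, i.e.\ justifying $\cF[w_\delta]\to\cF[v]$ from modular convergence of the gradients. This is not automatic because $M$ is nonautonomous and need not be uniformly convex or satisfy a global $\Delta_2$-type estimate that would give norm–modular equivalence for free; the correct mechanism is the uniform integrability furnished by the density construction together with the Generalized Vitali Convergence Theorem~\eqref{eq:genVit}, exactly as encoded in the proof of Theorem~\ref{theo:approx}. I would emphasize that one does not separately re-prove uniform integrability here: it is already built into the meaning of $W(\Omega)$-convergence via~\eqref{modular-convergence}, so the deduction is short once this equivalence is invoked. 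The rest is the routine but essential bookkeeping of the trivial inclusion inequality and taking infima.
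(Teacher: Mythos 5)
Your proof is correct in substance and rests on the same two pillars as the paper's argument: the density result (Theorem~\ref{theo:approx}) and a Vitali-type passage of the convergence through the functional. The route differs in one genuine way, though: the paper obtains Theorem~\ref{theo:lavrentiev-model} as a special case of Theorem~\ref{theo:lavrentiev-general}, whose proof first produces a minimizer $u\in W(\Omega)$ by the direct method of the calculus of variations and then approximates that single function; you instead approximate an \emph{arbitrary} finite-energy competitor $v=u_0+\vp$ and take infima at the end. Your variant is slightly leaner --- it never needs existence of minimizers (hence no lower-semicontinuity or coercivity discussion), and it proves the statement for $\cF$ directly --- whereas the paper's detour through $\cG$ buys the generality of the class~\eqref{cG-def} under~\eqref{sandwich} at the price of invoking the direct method.

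One step of yours needs repair. You assert that, ``equivalently (via~\eqref{eq:genVit})'', the family $\{M(\cdot,|\nabla w_\delta|)\}_\delta$ with $w_\delta=u_0+\vp_\delta$ is uniformly integrable. The equivalence~\eqref{eq:genVit} applies to the sequence that converges modularly, so it yields uniform integrability of $\{M(\cdot,|\nabla \vp_\delta|)\}_\delta$, not of the shifted family. To pass to $w_\delta$ you need the additional estimate
\begin{equation*}
M(x,|\nabla u_0+\nabla\vp_\delta|)\leq 2^{q-1}\left(M(x,|\nabla u_0|)+M(x,|\nabla\vp_\delta|)\right),
\end{equation*}
which follows from convexity and the $\Delta_2$-property of $M(x,\cdot)$, combined with the fact that $M(\cdot,|\nabla u_0|)\in L^1(\Omega)$ --- and this is where the hypothesis $\cF[u_0]<\infty$ is genuinely used, whereas your proposal invokes it only to note non-vacuousness. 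Since a family dominated by the sum of a fixed $L^1$ function and a uniformly integrable family is uniformly integrable, the argument then closes; this domination is exactly the display the paper writes out explicitly in the proof of Theorem~\ref{theo:lavrentiev-general}. With that line inserted your proof is complete, including part (ii), where your reading $v-u_0\in W(\Omega)\cap C^{0,\gamma}(\Omega)$ matches the statement and feeds correctly into Theorem~\ref{theo:approx}~(ii).
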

The above theorem is a special case of the following more general result. Let us consider the following variational functional 
\begin{align}\label{cG-def}
    \cG[u]:=\int_\Omega G(x,u,\nabla u)\,dx\,,
\end{align}
over an open and bounded set $\Omega\subset\rn$, $n\geq 1$, where $G:\Omega\times\R\times\rn\to\R$ is merely continuous with respect to the second and the third variable. We suppose that there exist constants $0<\nu<1<L$ and a nonnegative $h\in L^1(\Omega)$ such that
\begin{align}\label{sandwich}
    \nu \left(|\xi|^p+a(x)|\xi|^q\right)\leq G(x,z,\xi)\leq L\left(|\xi|^p+a(x)|\xi|^q+\Lambda(x)\right), \qquad\text{for all }\ x\in\Omega,\ z\in\R,\ \xi\in\rn\,. 
\end{align}

\begin{theo}[Absence of Lavrentiev's phenomenon for general functionals]\label{theo:lavrentiev-general}
 Suppose  $\Omega\subset\rn$ is  a bounded Lipschitz domain, $1<p< q<\infty$, $\vk>0$, $a:\Omega\to\rp$, $\cG$ is given by \eqref{cG-def}, and $W(\Omega)$ is defined in~\eqref{eq:Wdef}. Assume that {$u_0$ satisfies $\cG[u_0]<\infty$ and} $a \in \SP^{\vk}(\Omega)$. Then the following assertions hold true.
    \begin{enumerate}
        \item[(i)] If  $\vk \geq q-p$, then
            \begin{equation}\label{no-gap-gen}
        \inf_{v \in u_0 + W(\Omega)} \cG[v] = \inf_{w \in u_0 + C_c^\infty(\Omega)} \cG[w]\,. 
    \end{equation}
        \item[(ii)] Let $\gamma \in (0, 1]$. If $\vk \geq (q-p)(1-\gamma)$, then
            \begin{equation}\label{no-gap-hold}
        \inf_{v \in u_0 + W(\Omega) \cap C^{0, \gamma}(\Omega)} \cG[v] = \inf_{w \in u_0 + C_c^\infty(\Omega)} \cG[w]\,.
    \end{equation}
    \end{enumerate}
\end{theo}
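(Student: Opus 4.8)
The plan is to deduce Theorem~\ref{theo:lavrentiev-general} directly from the density result Theorem~\ref{theo:approx}, exploiting the two-sided bound \eqref{sandwich} to transfer modular approximation in $W(\Omega)$ into convergence of the energy $\cG$. Since $C_c^\infty(\Omega)\subset W(\Omega)$, the inequality $\inf_{v\in u_0+W(\Omega)}\cG[v]\le \inf_{w\in u_0+C_c^\infty(\Omega)}\cG[w]$ is automatic for both parts, so the whole content is the reverse inequality. To obtain it, I would fix an arbitrary competitor $v\in u_0+W(\Omega)$ (respectively $v\in u_0+W(\Omega)\cap C^{0,\gamma}(\Omega)$ in part (ii)) and produce a sequence of smooth functions whose energies converge to $\cG[v]$; taking infima then yields $\inf_{w\in u_0+C_c^\infty}\cG[w]\le \cG[v]$, and since $v$ is arbitrary, the reverse inequality follows.

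The key steps are as follows. First I would write $v=u_0+\vp$ with $\vp\in W(\Omega)$ and apply Theorem~\ref{theo:approx} to $\vp$: under the hypothesis $\vk\ge q-p$ (part (i)) or $\vk\ge(q-p)(1-\gamma)$ with $\vp\in C^{0,\gamma}$ (part (ii)), there is a sequence $(\vp_\delta)_\delta\subset C_c^\infty(\Omega)$ with $\vp_\delta\to\vp$ in $W(\Omega)$. One subtlety is that in part (ii) I must ensure $\vp=v-u_0$ inherits the requisite $C^{0,\gamma}$ regularity; since $v\in C^{0,\gamma}$ and $u_0$ is a fixed function for which we may assume the same regularity (or absorb $u_0$ appropriately), this is harmless, but I would flag it. Having the approximating sequence, I set $w_\delta:=u_0+\vp_\delta\in u_0+C_c^\infty(\Omega)$. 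Then $\nabla w_\delta=\nabla u_0+\nabla\vp_\delta\to\nabla u_0+\nabla\vp=\nabla v$ modularly in $L_M(\Omega)$ and $w_\delta\to v$ in measure. The core task is to show $\cG[w_\delta]\to\cG[v]$.

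For the convergence of energies I would invoke the Generalized Vitali Convergence Theorem \eqref{eq:genVit} together with the continuity of $G$ in its second and third arguments and the growth bound \eqref{sandwich}. Modular convergence $\nabla\vp_\delta\xrightarrow{M}\nabla\vp$ means $\{M(\cdot,|\nabla w_\delta(\cdot)|)\}_\delta$ is uniformly integrable (this is exactly the equivalence in \eqref{eq:genVit}), and from the upper bound in \eqref{sandwich}, namely $G(x,w_\delta,\nabla w_\delta)\le L(M(x,|\nabla w_\delta|)+\Lambda(x))$ with $\Lambda\in L^1$, the family $\{G(\cdot,w_\delta(\cdot),\nabla w_\delta(\cdot))\}_\delta$ is dominated by a uniformly integrable family and is therefore itself uniformly integrable. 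Passing to a subsequence, $w_\delta\to v$ and $\nabla w_\delta\to\nabla v$ almost everywhere, so by continuity of $G$ the integrands converge pointwise a.e.; uniform integrability then upgrades this to $L^1$-convergence of the integrands via Vitali, giving $\cG[w_\delta]\to\cG[v]$. The lower bound in \eqref{sandwich} guarantees $\cG[v]<\infty$ is consistent with $v\in u_0+W(\Omega)$, and the assumption $\cG[u_0]<\infty$ ensures the whole scheme is non-vacuous.

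The main obstacle is the delicate handling of uniform integrability of $\{G(\cdot,w_\delta,\nabla w_\delta)\}_\delta$ and the pointwise convergence, because $G$ depends on $u$ as well as $\nabla u$. One must be careful that modular convergence only gives convergence \emph{in measure} of the gradients, not everywhere; the standard remedy is to extract a subsequence along which both $w_\delta\to v$ and $\nabla w_\delta\to\nabla v$ almost everywhere, prove energy convergence along it, and then argue that since every subsequence admits a further subsequence with the same limit $\cG[v]$, the full sequence converges. A secondary point requiring care is that in part (ii) one must verify the approximants $w_\delta$ can be kept within the $C^{0,\gamma}$ competitor class only insofar as needed to invoke Theorem~\ref{theo:approx}; since that theorem already furnishes $\vp_\delta\in C_c^\infty$ converging in $W(\Omega)$, no additional regularity bookkeeping for $w_\delta$ is needed once the approximation is in hand, and the uniform integrability argument proceeds identically in both parts.
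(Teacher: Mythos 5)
Your proof is correct, and it runs on the same core machinery as the paper's own argument: Theorem~\ref{theo:approx} for density, the upper bound in \eqref{sandwich} for domination, and the Vitali convergence theorem to pass to the limit in the energy. There is, however, one genuine and in fact advantageous difference in organization. The paper first asserts, ``by direct methods of calculus of variations,'' the existence of a minimizer $u\in W(\Omega)$ of $\cG[u_0+\cdot]$ and then approximates that single minimizer; you instead approximate an \emph{arbitrary} competitor $v=u_0+\vp$, conclude $\inf_{w\in u_0+C_c^\infty(\Omega)}\cG[w]\le\cG[v]$, and only then take the infimum over $v$. The two routes give the same conclusion, but yours avoids the existence step altogether, and this matters here: $G$ is only assumed continuous in $(z,\xi)$ with the two-sided bound \eqref{sandwich}, and without convexity (or quasiconvexity) in $\xi$ the weak lower semicontinuity needed for the direct method is not available, so the paper's appeal to a minimizer is not fully justified as stated --- it would need repairing via $\varepsilon$-almost-minimizers, which is essentially what your argument does implicitly. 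Your technical execution is also sound: since $\nabla w_\delta-\nabla v=\nabla\vp_\delta-\nabla\vp$, modular convergence $\nabla w_\delta\xrightarrow{M}\nabla v$ holds, so \eqref{eq:genVit} yields uniform integrability of $\{M(\cdot,|\nabla w_\delta|)\}_\delta$, and \eqref{sandwich} then dominates the integrands by a uniformly integrable family plus the fixed $L^1$ function $\Lambda$; note only that the subsequence-of-a-subsequence extraction you describe is unnecessary, since Vitali's theorem applies directly under convergence in measure plus uniform integrability, which is how the paper phrases it. Finally, your worry in part (ii) about $\vp=v-u_0$ inheriting H\"older regularity is moot: the competitor class is $u_0+\bigl(W(\Omega)\cap C^{0,\gamma}(\Omega)\bigr)$, so the perturbation $\vp$ lies in $W(\Omega)\cap C^{0,\gamma}(\Omega)$ by definition, exactly matching the hypothesis of Theorem~\ref{theo:approx} (ii).
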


\begin{proof} Since $C_c^\infty(\Omega)\subset 
W(\Omega)$, it holds that $\inf_{v \in u_0+W(\Omega)}\cG[v]\leq \inf_{w \in u_0+C_c^\infty(\Omega)}\cG[w]\,.$ Let us concentrate on showing the opposite inequality. By direct methods of calculus of variation, there exists a minimizer, i.e., a function $u \in W(\Omega)$ such that
\begin{equation*}
    \cG[u_0 + u] = \inf_{v \in u_0+W(\Omega)}\cG[v]\,.
\end{equation*}
By assertion $(i)$ of Theorem~\ref{theo:approx}, there exists $(u_k)_{k}\subset C_c^\infty(\Omega)$ such that $u_k\to u$ in $W(\Omega)$. Since $G$ is continuous with respect to the second and the third variable, we infer that \[
\text{$G(x,u_0(x)+u_k(x), \nabla u_0(x)+\nabla u_k(x))\xrightarrow[k\to\infty]{} G(x,u(x),\nabla u(x))$ in measure.}
\] We shall now show that
\begin{equation}
    \label{unif-int}\text{the family $\big\{G(x,u_0(x)+u_k(x),\nabla u_0(x)+\nabla u_k(x))\big\}_{k\geq1}$ {is uniformly integrable}.}\end{equation}
By assumption \eqref{sandwich}, we notice that
\begin{align*}
G(x,u_0(x)+u_k(x),\nabla u_0(x) +&\nabla u_k(x)) \leq L\left(| \nabla u_0(x)+\nabla u_k(x)|^p+a(x)|\nabla u_0(x)+\nabla u_k(x)|^q\right) +L\Lambda(x)\\
&\leq C\left(|\nabla u_k(x)|^p+a(x)|\nabla u_k(x)|^q\right)  + C\left(|\nabla u_0(x)|^p+a(x)|\nabla u_0(x)|^q\right) +L\Lambda(x)\,,
\end{align*}
where $C$ is a positive constant, for every fixed $k\geq 1$. Note that  $h\in L^1(\Omega)$ and as $\cG[u_0] < \infty$, by~\eqref{sandwich}, we have $\int_{\Omega}|\nabla u_0(x)|^p+a(x)|\nabla u_0(x)|^q\, dx<\infty$. Moreover, since $(\nabla u_k)_{k}$ converges in $W(\Omega)$, we infer that\[\text{the family}\quad  \{|\nabla u_k(x)|^p+a(x)|\nabla u_k(x)|^q\}_{k}\quad \text{is uniformly integrable.}\] Thus, \eqref{unif-int} is justified. In turn, by Vitali Convergence Theorem, we have that $\cG[u_0+u_k] \xrightarrow[]{k \to \infty} \cG[u + u_0]$. Therefore, we have that
\begin{equation*}
    \inf_{w \in u_0 + C_c^\infty(\Omega)} \cG[w] \leq \cG[u_0 + u] = \inf_{v \in u_0 + W(\Omega)} \cG[v]
\end{equation*}
Consequently, \eqref{no-gap-gen} is proven.

By repeating the same procedure for $u\in W(\Omega)\cap C^{0,\gamma}(\Omega)$ with the use of Theorem~\ref{theo:approx} {\it (ii)} instead of {\it (i)}, one gets~\eqref{no-gap-hold}. 
\end{proof}

\section{Sharpness}\label{sec:example}
%
Let us recall the energy space $W(\Omega)$ given by~\eqref{eq:Wdef}. We state in Theorem~\ref{theo:main} that the range for the absence of the Lavrentiev's phenomenon is sharp. By sharpness, we mean that if $p,q$ and $\vk$ are outside the proper range~\eqref{range-no-gamma}, it is possible to find a Lipschitz domain $\Omega$, a weight $a \in \SP^{\vk}(\Omega)$ and a boundary data $u_0 \in W(\Omega)$ such that the Lavrentiev's phenomenon occurs. In what follows we consider the double-phase functional $\cF$ defined in~\eqref{eq:defF}, that is
 \begin{equation}\label{eq:defFB}
         \cF[u] = \int_{\Omega}  |\nabla u(x)|^p + a(x)|\nabla u(x)|^{q}\,dx\,.
 \end{equation}
 With Theorem~\ref{theo:sharpness} our aim is to show the occurrence of the Lavrentiev's phenomenon between the spaces $ u_0 + W(\Omega)$ and $ u_0 + C_c^\infty(\Omega)$, whenever $p, q$ and $ \vk$ are as in Theorem \ref{theo:main}, $(ii)$. We point out that, for our example, we modify the  construction from~\cite{eslemi} based on the  seminal idea of Zhikov's checkerboard~\cite{zh-86,zh}.

\begin{theo}[Sharpness]
    \label{theo:sharpness}
    Let $\cF$ be defined by~\eqref{eq:defF} and $p, q, \vk > 0$ such that
    $$1<p<n<n+\vk<q\,.$$ Then there exist a Lipschitz domain $\Omega$, a function $a \in \SP^{\vk}$ and $u_0 \in W(\Omega)$ satisfying $\mathcal{F}[u_0]<\infty$, such that 
    \begin{equation}
    \inf_{v \in u_0 + W(\Omega)} \cF[v] < \inf_{w \in u_0 + C_c^\infty(\Omega)}\cF[w]\,. \label{lavrentiev}
\end{equation}
\end{theo}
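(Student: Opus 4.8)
The plan is to realize the Lavrentiev gap by a Zhikov-type checkerboard construction \cite{zh-86,zh}, in the higher-dimensional form of \cite{eslemi}, but with the weight replaced by one adapted to the scale $\SP^\vk$. Concretely, I would fix a bounded Lipschitz domain $\Omega$ (a cube, or the polytope used in \cite{eslemi}), a degenerate set $\Sigma\subset\Omega$ on which the weight vanishes (the coordinate cross through the origin, or a union of hyperplane pieces), and a boundary datum $u_0$ whose only singularity is a single point lying on $\Sigma$, the value of $u_0$ jumping there between the distinct phases separated by $\Sigma$. The weight is taken comparable to $\mathrm{dist}(\cdot,\Sigma)^{\vk}$, but -- and this is the whole point of the modification over \cite{eslemi} -- it is defined so that $a^{1/\vk}$ is comparable to an explicit Lipschitz function; by Remark~\ref{rem:zk}~(iii) this already certifies $a\in\SP^\vk(\Omega)$, the precise comparability being recorded in the accompanying lemma on the properties of $a$. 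The three things to establish are then that $a\in\SP^\vk$, that $\cF[u_0]<\infty$, and that the two infima in \eqref{lavrentiev} differ.

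For the finiteness $\cF[u_0]<\infty$ I would tune the profile of $u_0$ near the singular point so that $\nabla u_0$ is large only inside the thin region where $a$ degenerates. The unweighted part $\int_\Omega|\nabla u_0|^p$ is finite essentially because the singularity is a single point of codimension $n$ and $p<n$; the weighted part $\int_\Omega a\,|\nabla u_0|^q$ is kept finite by balancing the concentration of $\nabla u_0$ against the vanishing of $a\sim\mathrm{dist}(\cdot,\Sigma)^{\vk}$. The same $u_0$, viewed as the competitor $v=u_0\in u_0+W(\Omega)$, yields the upper bound $\inf_{v\in u_0+W(\Omega)}\cF[v]\le\cF[u_0]=:A<\infty$, which is the easy half of \eqref{lavrentiev}.

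The substantive half is a uniform lower bound $\inf_{w\in u_0+C_c^\infty(\Omega)}\cF[w]\ge B$ with $B>A$. Here I would argue by capacity/slicing: any $w=u_0+\vp$ with $\vp\in C_c^\infty(\Omega)$ is continuous and agrees with $u_0$ near $\partial\Omega$, so on a whole family of slices transverse to $\Sigma$ it is forced to realize the fixed oscillation prescribed by the jump of $u_0$ at the singular point. Integrating the one-dimensional cost of this forced oscillation against the weight, and using H\"older's (or Jensen's) inequality together with $q>n+\vk$, produces a lower bound on $\int_\Omega a\,|\nabla w|^q$ that is bounded away from zero independently of $\vp$. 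Note that $p<n$ and $q>n+\vk$ give $q>p+\vk$, so we are outside the good range of Theorem~\ref{theo:main}~(i), which is exactly what makes such a bound possible.

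The main obstacle is this capacitary lower bound. In the H\"older setting of \cite{eslemi} the weight is controlled pointwise by a smooth modulus, whereas here $a$ is only comparable to the $\vk$-th power of a Lipschitz surrogate, so the transverse estimates must be carried out in terms of $\mathrm{dist}(\cdot,\Sigma)$ and the Lipschitz structure of $a^{1/\vk}$ rather than a H\"older bound; this is the source of the essentially more delicate arguments. The strict inequality $B>A$ is then secured by choosing the geometry of $\Sigma$ and the amplitude of the jump of $u_0$ so that the guaranteed lower bound $B$ strictly exceeds the explicitly computed energy $A=\cF[u_0]$, completing the proof of \eqref{lavrentiev}.
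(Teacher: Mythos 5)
Your overall architecture coincides with the paper's: a Zhikov-type checkerboard in the spirit of~\cite{eslemi}, modified so that $a^{1/\vk}$ is comparable to a Lipschitz function (certifying $a\in\SP^\vk$ via Remark~\ref{rem:zk}), a boundary datum singular at one point (using $p<n$), and a slicing/duality lower bound for smooth competitors that exploits $q>n+\vk$ through the integrability of a negative power of $a$ --- in the paper this is Lemma~\ref{lemma_3} combined with Young's inequality and Lemma~\ref{lemma_1}. However, your geometric setup contains a genuine gap. You let the weight vanish only on a \emph{lower-dimensional} set $\Sigma$ (a ``coordinate cross'', ``hyperplane pieces'', i.e.\ the interface separating the phases), take $a\sim\mathrm{dist}(\cdot,\Sigma)^{\vk}$, and plan to keep $\int_\Omega a|\nabla u_0|^q$ finite by ``balancing'' the concentration of $\nabla u_0$ against the decay of $a$. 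That balancing is impossible precisely in the regime you are in: if the unit jump of $u_0$ is realized in a layer of half-width $\delta(\rho)\sim\rho^{s}$ around a hyperplane piece of $\Sigma$, at distance $\rho$ from the singular point, then finiteness of the normal part of the weighted term forces $s(q-1-\vk)<n-1$, while finiteness of the tangential part forces $s(\vk+1)>q+1-n$; the resulting window $\tfrac{q+1-n}{\vk+1}<s<\tfrac{n-1}{q-1-\vk}$ is empty whenever $q\geq n+\vk$. So with your geometry the competitor $u_0\in W(\Omega)$ with $\cF[u_0]<\infty$ and a point jump simply does not exist (consistently, for $q<n+\vk$ it would exist --- but there Theorem~\ref{theo:main}~\emph{(i)} excludes any gap), and the first of your ``three things to establish'' fails.

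The missing idea is that the zero set of $a$ must be a \emph{full-dimensional} phase pinching at the singular point, and the competitor must be locally constant on $\supp a$. In the paper, $a=\ell^{\vk}$ from~\eqref{the-weight} is supported in the double cone $V$ of~\eqref{eq:defCp}, the degenerate $p$-phase is the open region $B_1\setminus V$, and $u_*$ from~\eqref{u_star_n_2}--\eqref{u_star} equals $\pm1$ on the two components of $V$ and does all of its transition inside $B_1\setminus V$, where $a\equiv 0$. Hence the weighted energy of the competitor is exactly zero, not merely finite, and $\cF[t_0u_*]=t_0^p r_2$ as in~\eqref{estimate on the minimum}. This zero is also what rescues your last step: the lower bound for $w\in u_0+C_c^\infty$ scales like $t_0^q$, the upper bound like $t_0^p$, so taking $t_0$ large as in~\eqref{t} produces the strict inequality. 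If the competitor's weighted energy were a positive constant times $t_0^q$, both sides of the comparison would scale like $t_0^q$, and ``choosing the amplitude of the jump large'' would decide nothing --- you would have to compare the two constants, which your sketch does not address. So: keep the $\SP^\vk$-certification and the slicing lower bound, but replace the interface-type degenerate set by an open checkerboard phase and impose $\nabla u_0\equiv 0$ on $\supp a$.
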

In order to show the presence of the Lavrentiev's phenomenon we first define the Lipschitz domain $\Omega$, the function $a$ and the boundary data $u_0$. We choose $\Omega$ as the ball of centre $0$ and radius $1$, i.e.,
\begin{equation} \label{domain}
    \Omega = B_1 \coloneqq B_1(0)\,.
\end{equation}
Now let us define the following set
\begin{equation}\label{eq:defCp}
    V:= \left \{x \in B_1:\ x_n^2 - \sum_{i=1}^{n-1}x_i^2 > 0 \right \} \, .
\end{equation}
\begin{figure}[htbp]\label{fig:sharpness}
  \centering
  \includegraphics[width=6cm,height=6cm]{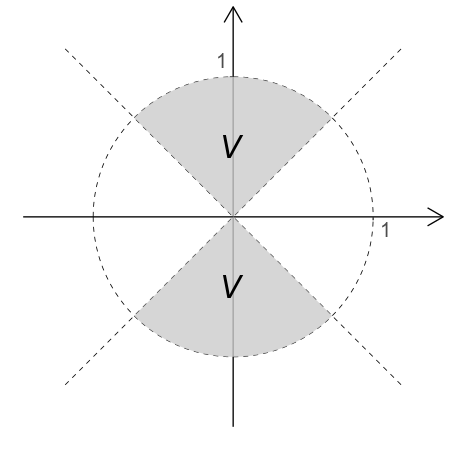}
  \caption{The main properties of $a\in\SP^\vk(B_1)$ and $u_*\in W(B_1)$ that produce the Lavrentiev's gap for our counterexample are the facts that $\supp a\subset V$, and $\nabla u_*\equiv 0$ in $B_1\setminus V$.}
\end{figure}
Regarding the weight $a$ we introduce the  function $\ell: \mathbb{R}^n \to \mathbb{R}$ via the following formula
\begin{equation*}
  \ell(x) := \max \left \{x_n^2 - \sum_{i=1}^{n-1}x_i^2, 0 \right \}|x|^{-1}\,,\qquad x = (x_1, \dots, x_n)\,.
\end{equation*}
The weight is defined as
\begin{equation} \label{the-weight}
    a := \ell^\vk\,. 
\end{equation}
Computing the partial derivative of $\ell$ in $V$ we get
\begin{equation*}
 \frac{\partial \ell}{\partial x_i} = \begin{cases}    -\tfrac{x_i}{|x|^3}{\textstyle \left(\sum_{j=1}^{n-1}x_j^2 +3x_n^2\right)} & \text{if}  \quad i=1,\dots,n-1 \,,  \displaybreak[1] \\ \noalign{\vskip4pt}
  \; \; \; \tfrac{x_i}{|x|^3}{\textstyle 
      \left(\sum_{j=1}^{n-1}3x_j^2 + x_i^2 \right)}  & \text{if} \quad i=n\,.
\end{cases}
\end{equation*}
We can observe that $\| \nabla \ell \|_{L^{\infty}(B_1)}$ is bounded. In turn, $\ell$ is Lipschitz continuous and consequently $a \in \SP^\vk(B_1)$. {We note that $\supp a\subset V$, so the set $V$ shall include whole $p$-$q$-phase, while $p$-phase will be in $B_1\setminus V$.}

Let us state and prove a lemma that we will use in the proof of Theorem~\ref{theo:sharpness}.
\begin{lem}  \label{lemma_1}
Let $a$ be defined by~\eqref{the-weight} and $V$ be defined as in~\eqref{eq:defCp}. Then
\begin{equation}
   r_1 \coloneqq \int_{V} {|x|^{-\frac{q(n-1)}{q-1}}}{a(x)^{-\frac{1}{q-1}}} \, dx\, < \infty\,. \label{integral}
\end{equation}
\end{lem}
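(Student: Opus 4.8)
The plan is to compute the integral in \eqref{integral} by converting it into a finite quantity through an explicit lower bound on the weight $a$ near the origin, exploiting the cone geometry of $V$. First I would substitute the definition $a = \ell^\vk$ and observe that on $V$ we have $\ell(x) = \big(x_n^2 - \sum_{i=1}^{n-1}x_i^2\big)|x|^{-1}$, so that
\[
a(x)^{-\frac1{q-1}} = \left(x_n^2 - \textstyle\sum_{i=1}^{n-1}x_i^2\right)^{-\frac{\vk}{q-1}}|x|^{\frac{\vk}{q-1}}\,.
\]
Thus the integrand becomes $|x|^{-\frac{q(n-1)}{q-1} + \frac{\vk}{q-1}}\big(x_n^2 - \sum_{i=1}^{n-1}x_i^2\big)^{-\frac{\vk}{q-1}}$. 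The integral then splits naturally into a radial part and an angular part over the spherical cap cut out by $V$.

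Next I would pass to spherical-type coordinates adapted to the cone. Writing $x = \vr\omega$ with $\vr = |x| \in (0,1)$ and $\omega \in \mathbb{S}^{n-1}\cap V$, the factor $\big(x_n^2 - \sum_{i=1}^{n-1}x_i^2\big)$ scales like $\vr^2$, and $dx = \vr^{n-1}\,d\vr\,d\cH^{n-1}(\omega)$. Collecting all powers of $\vr$, the radial exponent becomes
\[
-\tfrac{q(n-1)}{q-1} + \tfrac{\vk}{q-1} - \tfrac{2\vk}{q-1} + (n-1) = -\tfrac{q(n-1)}{q-1} + (n-1) - \tfrac{\vk}{q-1}\,,
\]
and since $\frac{q(n-1)}{q-1} - (n-1) = \frac{n-1}{q-1}$, the radial exponent simplifies to $-\frac{n-1+\vk}{q-1} = -\frac{n+\vk-1}{q-1}$. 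Radial integrability near $\vr = 0$ therefore requires $\frac{n+\vk-1}{q-1} < n$, i.e.\ $n + \vk - 1 < n(q-1)$, which rearranges to $n + \vk < nq$, hence $\frac{n+\vk}{n} < q$. This is guaranteed by the standing hypothesis $n + \vk < q$ (and since $n \geq 1$ one has $\frac{n+\vk}{n} \leq n+\vk < q$), so the $\vr$-integral over $(0,1)$ is finite.

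The remaining and genuinely delicate step is the angular integral, namely showing that
\[
\int_{\mathbb{S}^{n-1}\cap V}\left(\omega_n^2 - \textstyle\sum_{i=1}^{n-1}\omega_i^2\right)^{-\frac{\vk}{q-1}} d\cH^{n-1}(\omega) < \infty\,,
\]
where the integrand blows up on the boundary cone $\{\omega_n^2 = \sum_{i=1}^{n-1}\omega_i^2\}$. The main obstacle is to verify that this singularity is integrable over the cap. I would parametrize the cap by the polar angle $\psi$ from the $x_n$-axis, so that $\omega_n = \cos\psi$ and $\sum_{i=1}^{n-1}\omega_i^2 = \sin^2\psi$, giving $\omega_n^2 - \sum_{i=1}^{n-1}\omega_i^2 = \cos^2\psi - \sin^2\psi = \cos 2\psi$, which vanishes linearly as $\psi \to \tfrac\pi4$. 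Since $d\cH^{n-1}$ contributes a factor $\sin^{n-2}\psi\,d\psi$ that is bounded and nonvanishing near $\psi = \tfrac\pi4$, the angular integral behaves like $\int (\tfrac\pi4-\psi)^{-\frac{\vk}{q-1}}\,d\psi$ near the boundary, which converges precisely when $\frac{\vk}{q-1} < 1$, i.e.\ $\vk < q-1$. Since the hypotheses give $p > 1$ together with $p < n < n+\vk < q$, one has $\vk < q - n \leq q - 1$, so the angular singularity is indeed integrable. Combining the finite radial and angular contributions yields $r_1 < \infty$, as claimed.
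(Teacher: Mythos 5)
Your overall strategy coincides with the paper's: pass to spherical coordinates, split $r_1$ into a radial and an angular factor, check the radial exponent near the origin using $q > n+\vk$, and handle the angular blow-up $(\cos 2\psi)^{-\frac{\vk}{q-1}}$, which vanishes linearly at $\psi = \tfrac{\pi}{4}$, via $\tfrac{\vk}{q-1} < 1$. The only structural difference is that you treat all $n \geq 2$ at once by factoring the scaling $x_n^2 - \sum_{i=1}^{n-1}x_i^2 = \vr^2\bigl(\omega_n^2 - \sum_{i=1}^{n-1}\omega_i^2\bigr)$, whereas the paper writes out explicit coordinate systems separately for $n=2$ and $n>2$; your unified version is slightly cleaner, and your angular estimate is exactly the paper's inequality $\left|\cos\left(2(\theta_0+\tfrac{\pi}{4})\right)\right| \geq \theta_0$.

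There is, however, a genuine error in your radial step. You correctly compute the \emph{total} radial exponent, Jacobian included, to be $-\frac{n+\vk-1}{q-1}$ (this agrees with the paper's exponent $n-1+\frac{q(1-n)-\vk}{q-1}$), so the radial integral is $\int_0^1 \vr^{-\frac{n+\vk-1}{q-1}}\,d\vr$ with no leftover factor $\vr^{n-1}$. Convergence of this one-dimensional integral requires the exponent to exceed $-1$, i.e.\ $\frac{n+\vk-1}{q-1} < 1$, not $\frac{n+\vk-1}{q-1} < n$ as you wrote: the threshold $n$ is the one appropriate \emph{before} the surface measure $\vr^{n-1}d\vr$ is absorbed into the exponent, so you have in effect counted the Jacobian twice. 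As stated, your criterion is strictly weaker than the true one; for instance, for $n=2$ it would assert convergence whenever $q > \tfrac{\vk+3}{2}$, which is false for $q$ slightly below $2+\vk$, where the radial integral diverges. The slip is harmless here only because the correct condition $\frac{n+\vk-1}{q-1} < 1$ is \emph{exactly} equivalent to the standing hypothesis $q > n+\vk$ (this equivalence is precisely why the paper's range is sharp), so replacing ``$<n$'' by ``$<1$'' repairs the argument with no other changes. A cosmetic remark: your angular discussion treats only the singularity at $\psi = \tfrac{\pi}{4}$; the symmetric one at $\psi = \tfrac{3\pi}{4}$ (the southern nappe of the cone) should be mentioned, as the paper does by analogy.
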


\begin{proof}
We use the spherical coordinates. {The proof is presented in two cases -- for $n=2$ and $n>2$.} \newline 

For $n=2$ we take
  $$x_1:= \rho \cos\theta\quad\text{and} \quad x_2:= \rho \sin\theta\,,$$
consequently
  $$a=\rho^{\vk}\max(-\cos 2\theta,0)^{\vk},$$
  where $\theta \in [0,2\pi)$. After this change of variables $V$ is mapped into $S \coloneqq (0, 1) \times  \left [(\frac{\pi}{4}, \frac{3\pi}{4}) \cup (\frac{5\pi}{4}, \frac{7\pi}{4} ) \right ]$, so 
~\eqref{integral} reads as
\begin{equation*}
 {r_1=}\int_{S} \rho^{1 - \frac{q + \vk}{q-1}}\left|\cos{(2\theta)} \right|^{-\frac{\vk }{q-1}}\,d\rho\,d\theta\,.
\end{equation*}
As $q > \vk + 2$, we have $1 - \frac{q+\vk}{q-1} > -1$, which implies that $\int_{0}^{1} \rho^{1 - \frac{q(1+\vk)}{q-1}}\,d\rho < \infty$.
As far as $-\cos{(2\theta)}^{-\frac{\vk }{q-1}}$ is concerned, we observe at first that over the set that we integrate on, it holds that $\cos(2\theta) = 0$ only for $\theta = \tfrac{\pi}{4}, \tfrac{3\pi}{4}, \tfrac{5\pi}{4}, \tfrac{7\pi}{4}$. Therefore, it suffices to prove integrability of $\left|\cos{(2\theta)}\right|^{-\frac{\vk }{q-1}}$ near these points. Observe that for sufficiently small $\theta_0 > 0$ we have
$$\left|\cos \left(2(\theta_0 + \tfrac{\pi}{4}) \right)\right| \geq 2\theta_0 \left (1-\frac{2\theta_0}{\pi} \right ) \geq \theta_0\,,$$
which means that for $\theta = \theta_0 + \tfrac{\pi}{4}$ we get
\begin{equation}\label{eq:gr1}
\left| \cos \left ( 2\theta\right )\right |^{-\frac{\vk }{q-1}} \leq \left ( \theta - \frac{\pi}{4}\right )^{-\frac{\vk }{q-1}}\,.
\end{equation}
Since $q > \vk+1$, we have $-\frac{\vk}{q-1}>-1$, and therefore, we have the integrability of $|\cos{(2\theta)}|^{-\frac{\vk}{q-1}}$ near $\tfrac{\pi}{4}$, and by analogy, also in points $\tfrac{3\pi}{4}, \tfrac{5\pi}{4}, \tfrac{7\pi}{4}$. Therefore, we showed that $r_1$ is finite for $n = 2$. \newline

For $n > 2$ we set
$$x_1 := \rho \cos{\theta}\prod_{k=1}^{n-2}\sin{\theta_{k}}\,, \quad x_2 := \rho \sin{\theta}\prod_{k=1}^{n-2}\sin{\theta_{k}}\,,\quad x_i := \rho\cos{\theta_{n-2}}\prod_{k=i-1}^{n-2}\sin{\theta_k}\,, \text{ for } i \geq 3\, $$
and so
$$a=\rho^{\vk} \max(\cos2\theta_{n-2},0)^\vk\,,$$ with $\rho>0$ and $\theta_i \in [0,\pi]$ for $i = 1, \dots, n-2$. We observe that $V$ is mapped to $S = (0, 1) \times (0, 2\pi) \times (0, \pi)^{n-2} \times \left(\left(0, \tfrac{\pi}{4}\right) \cap \left(\tfrac{3\pi}{4}, \pi\right)\right)$, that is, $\theta_{n-2} \in \left(0, \tfrac{\pi}{4}\right) \cap \left(\tfrac{3\pi}{4}, \pi\right)$ and the modulus of the determinant of changing the variables may be estimated by $\rho^{n-1}$. Therefore, we can estimate
\begin{equation*}
    r_1 \leq \int_{S} \rho^{n-1 + \frac{q(1-n) -\vk}{q-1}}\left|\cos(2\theta_{n-2})\right|^{-\frac{\vk }{q-1}}\,d\rho\,{d\theta_{n-2}}\,.
\end{equation*}
As $q > \vk+n$, it follows that $n - 1 + \frac{q(1-n) -  \vk}{q-1} > -1$, and therefore, $\int_{0}^{1} \rho^{n - 1 + \frac{q(1-n) - \vk}{q-1}}\,d\rho < \infty$. Using analogous estimates as~\eqref{eq:gr1}, one may also prove the integrability of $\left(\cos(2\theta_{n-2})\right)^{-\frac{\vk }{q-1}}$ in $\left(0, \tfrac{\pi}{4}\right) \cap \left(\tfrac{3\pi}{4}, \pi\right)$, obtaining the finiteness of $r_1$ in case of $n \geq 3$. 
\end{proof}
As far as the boundary data is concerned we first define a function $u_*$ and, after we establish some of its properties, we shall find $u_0$ such that $u_* \in (u_0 + W(B_1))$, but $u_*\not\in \overline{(u_0 + C_c^\infty(B_1))}^W$. We set
\begin{equation} \label{u_star_n_2}
u_*(x) \coloneqq \begin{cases} \displaystyle \sin(2\theta) & \text{if}  \quad 0 \leq \theta \leq \frac{\pi}{4}\,,  \displaybreak[1] \\
\displaystyle  1  & \text{if} \quad \frac{\pi}{4} \leq \theta \leq \frac{3\pi}{4}\,, \displaybreak[1] \\
\displaystyle \sin(2\theta - \pi) & \text{if}  \quad \frac{3\pi}{4}\leq \theta \leq \frac{5\pi}{4}\,, \displaybreak[1] \\
\displaystyle  -1  & \text{if} \quad \frac{5\pi}{4} \leq \theta \leq \frac{7\pi}{4}\,, \displaybreak[1] \\
\displaystyle \sin(2\theta) & \text{if}  \quad \frac{7\pi}{4}\leq \theta \leq 2\pi
\end{cases}
\end{equation}
for $n=2$, and as
\begin{equation} \label{u_star}
u_*(x) \coloneqq \begin{cases} \displaystyle 1 & \text{if}  \quad 0 \leq \vt_{n-2} \leq \frac{\pi}{4}\,,  \displaybreak[1] \\
\displaystyle  \sin(2\vt_{n-2})  & \text{if} \quad \frac{\pi}{4} \leq \vt_{n-2} \leq \frac{3\pi}{4}\,, \displaybreak[1] \\
\displaystyle -1 & \text{if}  \quad \frac{3\pi}{4}\leq \vt_{n-2} \leq \pi\,,
\end{cases}
\end{equation}
for $n \geq 3$. The boundary data $u_0$ is determined by the following expression 
\begin{equation} \label{boundary_data}
    u_0(x):=t_0|x|^2u_*(x)\,,
\end{equation}
where $t_0$ will be chosen. We have the following lemma.
\begin{lem}\label{lemma_2}
The function $ u_*$ belongs to $u_0+W(B_1)$. In particular
    $$r_2
 \coloneqq \int_{B_1}|\nabla u_*(x)|^p \, dx < \infty\,.$$
\end{lem}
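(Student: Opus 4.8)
The plan is to verify the two claims of Lemma~\ref{lemma_2} in turn: first that $r_2=\int_{B_1}|\nabla u_*|^p\,dx<\infty$, and then that $u_*\in u_0+W(B_1)$, the latter reducing to checking that $u_*-u_0\in W(B_1)$, i.e.\ that $u_*-u_0\in W^{1,1}_0(B_1)$ and that $\int_{B_1}M(x,|\nabla(u_*-u_0)|)\,dx<\infty$.

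First I would compute $\nabla u_*$ in spherical coordinates. Since $u_*$ depends only on the angular variable ($\theta$ for $n=2$, $\vt_{n-2}$ for $n\geq3$) and not on the radius $\rho=|x|$, its gradient has only an angular component, whose magnitude scales like $|x|^{-1}$ times a bounded trigonometric factor coming from $\tfrac{d}{d\theta}u_*$. Concretely, $|\nabla u_*(x)|\leq C|x|^{-1}$ on the transition sectors where $u_*$ is given by a $\sin$ expression, and $\nabla u_*\equiv0$ on the sectors where $u_*$ is constant (in particular on $B_1\setminus V$, as Figure~\ref{fig:sharpness} indicates). Hence $\int_{B_1}|\nabla u_*|^p\,dx\leq C\int_{B_1}|x|^{-p}\,dx$, and passing to spherical coordinates this becomes $C\int_0^1\rho^{n-1-p}\,d\rho$ times a finite angular integral. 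Because the hypothesis gives $p<n$, we have $n-1-p>-1$, so the radial integral converges and $r_2<\infty$.

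Next I would handle $u_*-u_0$. With $u_0=t_0|x|^2u_*$, I compute $\nabla(u_*-u_0)=\nabla u_*(1-t_0|x|^2)-t_0u_*\nabla(|x|^2)$. The first term is controlled exactly as above, while the second is bounded since $u_*$ is bounded and $\nabla(|x|^2)=2x$ is bounded on $B_1$; moreover both pieces are supported (for their singular, $|x|^{-1}$-type behaviour) inside $V$, since the only nonzero contribution of $\nabla u_*$ comes from the transition region. Thus $|\nabla(u_*-u_0)|\in L^p(B_1)\subset L^1(B_1)$ by the same computation, giving the $W^{1,1}$ bound. For membership in $W^{1,1}_0$ I would note that $u_*-u_0=u_*(1-t_0|x|^2)$ vanishes on $\partial B_1=\{|x|=1\}$ precisely when $t_0$ is chosen so that $1-t_0=0$, i.e.\ $t_0=1$; with this choice $u_*-u_0$ attains zero boundary values and belongs to $W^{1,1}_0(B_1)$. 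Finally the modular finiteness $\int_{B_1}M(x,|\nabla(u_*-u_0)|)\,dx<\infty$ reduces to controlling the weighted term $\int_V a(x)|\nabla(u_*-u_0)|^q\,dx$, which is where the explicit form $a=\ell^\vk$ with $\ell\sim|x|^{-1}(x_n^2-\sum x_i^2)$ enters and produces the favourable cancellation between the vanishing of $a$ and the $|x|^{-q}$ growth of $|\nabla u_*|^q$ on $V$.

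The main obstacle I expect is this last weighted estimate: showing $\int_V a(x)|\nabla u_*|^q\,dx<\infty$ despite the singularity $|\nabla u_*|^q\sim|x|^{-q}$ with $q>n$. The point is that on $V$ the weight $a(x)=\ell(x)^\vk$ itself degenerates, behaving like $(\text{angular distance to }\partial V)^\vk$ near the boundary of the cone, and this vanishing must compensate both the radial blow-up $|x|^{-q}$ and the angular singularity of $|\nabla u_*|^q$. I would estimate $a(x)|\nabla u_*(x)|^q\leq C\,\ell(x)^\vk|x|^{-q}$ and pass to spherical coordinates; the radial integral $\int_0^1\rho^{n-1-q+\vk}\cdot\rho^{\vk\,(\cdot)}\,d\rho$ together with the angular integral involving $|\cos(2\theta)|^\vk$ (resp.\ $|\cos(2\vt_{n-2})|^\vk$) must be shown finite. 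Here the hypothesis $q>n+\vk$ and the positivity of $\vk$ are exactly what guarantee convergence, mirroring the computation already carried out for $r_1$ in Lemma~\ref{lemma_1}; indeed I would reuse the elementary lower bound near the cone boundary, $|\cos(2(\theta_0+\tfrac\pi4))|\geq\theta_0$, established in~\eqref{eq:gr1}, to reduce the angular integrability to a one-dimensional power estimate.
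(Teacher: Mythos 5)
Your estimate of $r_2$ is correct and is essentially the paper's computation: $|\nabla u_*|\le C|x|^{-1}$ with a bounded angular factor, hence $r_2\le C\int_0^1\rho^{n-1-p}\,d\rho<\infty$ since $p<n$. The genuine gap is in your treatment of the weighted term, where you have the geometry of the construction inverted. By \eqref{eq:defCp} and \eqref{u_star_n_2}--\eqref{u_star}, the cone $V$ consists precisely of the angular sectors on which $u_*$ is constant $\pm 1$ (for $n=2$ one has $V=\{x\in B_1:\cos 2\theta<0\}$, i.e.\ $\theta\in(\tfrac{\pi}{4},\tfrac{3\pi}{4})\cup(\tfrac{5\pi}{4},\tfrac{7\pi}{4})$, exactly where $u_*\equiv\pm1$), while the sine transition --- the only place where $\nabla u_*\neq 0$, with its $|x|^{-1}$ blow-up --- lies in $B_1\setminus\overline{V}$, where $a$ vanishes because $\supp a\subset\overline{V}$ by \eqref{the-weight}. (The caption of Figure~\ref{fig:sharpness}, which you rely on, states this backwards; the correct statement is the one opening the paper's proof: $\nabla u_*\equiv 0$ on $\supp a$.) Consequently $a|\nabla u_*|^q=0$ almost everywhere, the $q$-term of $\cF[u_*]$ vanishes identically, and $\cF[u_*]=r_2$ with no estimate needed at all: this exact disjointness of supports is the entire mechanism of the Zhikov-type checkerboard. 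Your proposed replacement --- bounding $a|\nabla u_*|^q\le C\ell^{\vk}|x|^{-q}$ on $V$ and power-counting --- cannot be completed: since $\ell(x)\le|x|$ on $V$, in spherical coordinates the majorant produces the radial factor $\int_0^1\rho^{\,n-1+\vk-q}\,d\rho$, which diverges precisely because $q>n+\vk$. The hypothesis you invoke as ``exactly what guarantees convergence'' guarantees divergence of your majorant. Nor is Lemma~\ref{lemma_1} a precedent: there $a$ enters with the \emph{negative} power $a^{-1/(q-1)}$, which is why large $q$ helps; in the energy term $a$ enters with a positive power, and large $q$ hurts. Were the supports really overlapping as you assume, the lemma would be false --- indeed that divergence is what produces the gap in Theorem~\ref{theo:sharpness}.

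A second, independent problem: to get $u_*-u_0\in W^{1,1}_0(B_1)$ you set $t_0=1$. This is not available to you, since $t_0$ is fixed by \eqref{t} and must be large --- it is the size of $t_0$ that lets the lower bound of order $t_0^q$ for smooth competitors exceed the upper bound $t_0^p r_2$ at the end of the proof of Theorem~\ref{theo:sharpness}; choosing $t_0=1$ destroys the counterexample. You are right that for $t_0\neq 1$ the function $u_*-u_0=(1-t_0|x|^2)u_*$ has nonvanishing trace on $\partial B_1$, a point the paper's own proof passes over in silence (it only verifies $u_*\in W^{1,1}(B_1)$ and $\cF[u_*]<\infty$). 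But what is actually needed and used later is that the competitor $t_0u_*$ belongs to $u_0+W(B_1)$: the difference $t_0u_*-u_0=t_0(1-|x|^2)u_*$ vanishes on $\partial B_1$ for \emph{every} value of $t_0$, on $\supp a$ its gradient reduces to $-2t_0u_*x$ (bounded, again by the support disjointness), and its $p$-energy is finite by $r_2<\infty$. The correct repair is to change which function one verifies membership for, not the value of $t_0$.
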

\begin{proof}
    We start observing that $\supp a\subset V$ and $\nabla u_*\equiv 0$ in  $\supp a$, i.e.,
    \begin{equation*}
     \int_{B_1}|\nabla u_*(x)|^p +  a(x)|\nabla u_*(x)|^{q}\,dx =  \int_{B_1}|\nabla u_*(x)|^p \,dx =r_2\,.
    \end{equation*}
To justify that $r_2$ is finite, we notice that using spherical coordinates for $n=2$ one gets
$$r_2= \int_0^1 \rho \, d\rho \left [ \int_{0}^{\frac{\pi}{4}} |2\cos(2\theta)|^p \, d\theta + \int_{\frac{3\pi}{4}}^{\frac{5\pi}{4}}|2\cos(2\theta - \pi)|^p \, d\theta + \int_{\frac{7\pi}{4}}^{2\pi} |2\cos(2\theta)|^p \, d\theta \right ]<\infty\,,$$
whereas when $n>2$, then
\[r_2= \int_0^1 \int_0^{2\pi}\int_0^{\pi} |\det J| \, d\rho\, d\theta \prod_{i=1}^{n-3}\,d\theta_i \int_{0}^{\pi}|2\cos(2\theta_{n-2}-\pi)|^p \,d\theta_{n-2}<\infty\,,\]
where $J$ is the Jacobian matrix of the spherical coordinate transformation. Now, since $p<n$ we can apply the Sobolev embedding theorem to obtain $u_* \in L^p(B_1)$. Then $u_* \in W^{1,1}(B_1)$ and $\cF[u_*]  < \infty$, namely $u_* \in W(B_1)$.
\end{proof}

We take 
\begin{equation}
    \label{t}t_0 >\left [ r_2\left ( \frac{q}{r_3} \right )^q \left ( \frac{r_1}{q-1} \right )^{q-1}\right ]^{\frac{1}{q-p}}\,,
\end{equation}
with $r_1$ from Lemma~\ref{lemma_1}, $r_2$ from Lemma~\ref{lemma_2}, and \begin{equation}
    \label{r3}
r_3 \coloneqq \cH^{n-1}(\overline{V} \cap \partial B_1)\,.
\end{equation} 

 Now let  us state the following observation made in~\cite{eslemi}. The proof consists of calculations with the spherical coordinates in which Fubini’s theorem and Jensen’s inequality are used, see~\cite[p.~17]{eslemi} for details.
 
\begin{lem} \label{lemma_3}
For  {any} function $w \in u_0 + C^{\infty}_0(B_1)$ it holds
\begin{equation*}
     t_0
     {\cH^{n-1}(\overline{V} \cap \partial B_1)}  \leq \int_{V} \frac{1
     }{|x|^{n-1}} \left | \left \langle \frac{x}{|x|},\nabla w(x)  \right \rangle\right | \,dx\,,
\end{equation*}
for $t_0$ as in \eqref{t}  {and $u_0$ as in~\eqref{boundary_data}}.
\end{lem}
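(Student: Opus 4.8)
The plan is to establish the lower bound on $\int_V |x|^{-(n-1)} |\langle x/|x|, \nabla w\rangle|\,dx$ by a slicing argument along radial rays, exploiting the prescribed boundary values of $w$ on $\partial B_1$. Since $w \in u_0 + C_c^\infty(B_1)$, on the sphere $\partial B_1$ we have $w = u_0$, and by \eqref{boundary_data} together with $|x|=1$ there, we get $w|_{\partial B_1} = t_0 u_*$. On the cone $\overline{V}\cap\partial B_1$ the function $u_*$ equals $\pm 1$ (it is $1$ on the ``top'' sector and $-1$ on the ``bottom'' sector by \eqref{u_star_n_2}/\eqref{u_star}), so $w$ attains the boundary value $\pm t_0$ there. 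The key geometric observation is that the radial segments $\{r\omega : r\in(0,1)\}$ for $\omega \in \overline{V}\cap\partial B_1$ foliate $V$, and along each such ray $w$ must travel from its value at the origin (or near it) out to $\pm t_0$ at the boundary.

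First I would pass to spherical coordinates $x = r\omega$ with $r\in(0,1)$ and $\omega$ in the unit sphere, writing the radial derivative as $\partial_r w(r\omega) = \langle \omega, \nabla w(r\omega)\rangle = \langle x/|x|, \nabla w\rangle$. The volume element is $dx = r^{n-1}\,dr\,d\cH^{n-1}(\omega)$, so the weight $|x|^{-(n-1)} = r^{-(n-1)}$ exactly cancels the Jacobian factor, giving
\begin{equation*}
\int_{V} \frac{1}{|x|^{n-1}}\left|\left\langle \frac{x}{|x|}, \nabla w\right\rangle\right|\,dx = \int_{\overline{V}\cap\partial B_1} \int_0^1 \left|\partial_r w(r\omega)\right|\,dr\,d\cH^{n-1}(\omega)\,.
\end{equation*}
This cancellation is precisely why the weight $|x|^{-(n-1)}$ appears in the statement. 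Then, for each fixed $\omega \in \overline{V}\cap\partial B_1$, the inner integral dominates the total variation of $r\mapsto w(r\omega)$ on $(0,1)$, hence by the fundamental theorem of calculus
\begin{equation*}
\int_0^1 |\partial_r w(r\omega)|\,dr \geq \left| w(\omega) - \lim_{r\to 0^+} w(r\omega)\right| = |t_0 u_*(\omega) - w(0)|\,.
\end{equation*}
Since $w$ is continuous at the origin (being $u_0$ plus a smooth compactly supported function, with $u_0(0)=0$), the limit equals the single value $w(0)$ independent of $\omega$.

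Next I would integrate this pointwise bound over the two sectors where $u_*(\omega) = +1$ and $u_*(\omega) = -1$. Writing $\Sigma^+ $ and $\Sigma^-$ for these two pieces of $\overline{V}\cap\partial B_1$, and $w(0) =: c$, I obtain a lower bound of the form
\begin{equation*}
\int_{\Sigma^+} |t_0 - c|\,d\cH^{n-1} + \int_{\Sigma^-}|{-t_0} - c|\,d\cH^{n-1} = \cH^{n-1}(\Sigma^+)|t_0 - c| + \cH^{n-1}(\Sigma^-)|t_0 + c|\,.
\end{equation*}
By the symmetry of $V$ and $u_*$ under the relevant reflection, $\Sigma^+$ and $\Sigma^-$ have equal measure, each being $\tfrac12 \cH^{n-1}(\overline{V}\cap\partial B_1) = \tfrac12 r_3$. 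The expression $\tfrac{r_3}{2}(|t_0-c| + |t_0+c|)$ is minimized over $c\in\R$ at $c=0$, where it equals $r_3 t_0$; for any $c$ it is at least $r_3 t_0$ by the triangle inequality $|t_0-c|+|t_0+c|\geq |(t_0-c)+(t_0+c)| = 2t_0$. This yields exactly $t_0\,\cH^{n-1}(\overline V\cap\partial B_1) \leq \int_V |x|^{-(n-1)}|\langle x/|x|,\nabla w\rangle|\,dx$, which is the claim.

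The main obstacle is the rigorous justification of the boundary-trace and origin-value identities: I must confirm that for $w = u_0 + \phi$ with $\phi\in C_c^\infty(B_1)$, the radial limit as $r\to 1^-$ genuinely recovers $t_0 u_*(\omega)$ for $\cH^{n-1}$-almost every $\omega\in\overline V\cap\partial B_1$, and that $w$ has a well-defined finite value at the origin so that the Newton--Leibniz step is valid along almost every ray. Because $\phi$ vanishes in a neighborhood of $\partial B_1$ and is smooth, the boundary behavior is governed entirely by $u_0 = t_0|x|^2 u_*$, which is continuous up to $\partial B_1$ with the stated sector values; the interior smoothness of $\phi$ and the continuity of $u_0$ away from the singular cone handle the origin value. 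I note that the paper attributes this computation to \cite{eslemi}, and indeed the only genuinely delicate point is ensuring the one-dimensional fundamental theorem of calculus applies along almost every radial fibre, which follows from the absolute continuity of $r\mapsto w(r\omega)$ for a.e.\ $\omega$ via Fubini applied to the integrable radial derivative.
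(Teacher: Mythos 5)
Your proof is correct and is essentially the argument the paper defers to (citing~\cite{eslemi}): polar coordinates so that the weight $|x|^{1-n}$ exactly cancels the Jacobian, Fubini to slice the cone $V$ into radial rays over $\overline{V}\cap\partial B_1$, and the observation that along each ray $w$ must travel from the common origin value $w(0)$ to the prescribed boundary value $\pm t_0$, after which summing the two sectors and the triangle inequality $|t_0-c|+|t_0+c|\geq 2t_0$ gives the claim. The only cosmetic difference is that you apply the fundamental theorem of calculus ray by ray and then integrate over the angular variable, whereas the cited proof packages the same step as Jensen's inequality applied to angular averages; the substance is identical.
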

Now we are ready to prove the theorem.
\begin{proof}[Proof of Theorem~\ref{theo:sharpness}]

Bearing in mind the definition of $u_*$ in \eqref{u_star_n_2}-\eqref{u_star} and of  $u_0$ in \eqref{boundary_data} we start observing that
\begin{align}
  \inf_{v \in u_0 + W(B_1)} \cF[v]  \leq \cF[t_0u_*] \notag &= t_0^p \int_{B_1}|\nabla u_*(x)|^p \,dx + t_0^q \int_{B_1} a(x)|\nabla u_*(x)|^{q}\,dx \notag \\ & =  t_0^p \int_{B_1}|\nabla u_*(x)|^p \,dx  = t_0^p r_2\,, \label{estimate on the minimum}
\end{align}
which is finite by Lemma~\ref{lemma_2}. {Let us fix arbitrary $w\in u_0+C_0^\infty(B_1)$ and $\lambda>0$. In order to estimate from below $\cF[w]$ we notice that}  Lemma \ref{lemma_3} together with Young's inequality Lemma~\ref{lemma_1} leads to
\begin{align*}
    r_3 \lambda t_0 & \leq \int_{V} \left(\frac{\lambda}{|x|^{n-1}}\frac{1}{{a(x)}}\right) \left | \left \langle \frac{x}{|x|},\nabla w(x)  \right \rangle\right | {a(x)}\,dx \\ &
    \leq  \int_{V} \left( \frac{\lambda}{|x|^{n-1}}\frac{1}{a(x)}\right)^\frac{q}{q-1} a(x) \,dx + \int_{V}\left | \left \langle \frac{x}{|x|},\nabla w(x)   \right \rangle\right |^q  a(x)\,dx \\ & \leq r_1 \lambda^{\frac{q}{q-1}} + \int_{V} a(x)|\nabla w(x)|^q \,dx\,. 
\end{align*}
where $\lambda >0$ is fixed. Consequently,
\[ r_3 \lambda t_0\leq r_1 \lambda^{\frac{q}{q-1}} + \cF[w].\]
Then {for any $w\in u_0+C_0^\infty(B_1)$ it holds}
\begin{align*}
    \cF[w] & \geq r_1 \sup_{\lambda > 0} \left ( \lambda t_0 \frac{r_3}{r_1} - \lambda^{\frac{q}{q-1}}\right )  = r_1 \sup_{\lambda \in \R} \left ( \lambda t_0 \frac{r_3}{r_1} - |\lambda|^{\frac{q}{q-1}}\right )  = r_1 \left ( \frac{(q-1)t_0r_3}{qr_1}\right )^q \frac{1}{q-1}.
\end{align*}
Now, bearing in mind~\eqref{t} and using \eqref{estimate on the minimum} we get
\begin{equation}
     \inf_{w \in u_0 + C^{\infty}_0(B_1)} \cF[{w}] \geq \left ( \frac{r_3}{q}\right )^q \left ( \frac{q-1}{r_1}\right )^{q-1} t_0^q > r_2 t_0^p \geq  \inf_{v \in u_0 + W(B_1)} \cF[v]. \label{gap with c_infty}
\end{equation}
 Hence the occurrence of the Lavrentiev's phenomenon, that is \eqref{lavrentiev}, is proven.
 \end{proof}
\section{Generalizations}\label{sec:gen}
In this section, we describe how results presented in Section~\ref{sec:approximation-and-no-gap} may be generalized to consider a wider class of functionals.
\subsection{Variable exponent double phase functionals}
We can consider a variable exponent double phase functional, given by
\begin{equation}\label{eq:defVarDou}
    \mathcal{E}_1[u] = \int_{\Omega} b(x,u)\left(|\nabla u(x)|^{p(x)} + a(x)|\nabla u(x)|^{q(x)}\right)\,dx\,,
\end{equation}
where functions $p, q, a$ are such that $1 \leq p < q \in L^{\infty}(\Omega)$, $0 \leq a \in L^{\infty}(\Omega)$, and $b$ is continuous with respect to the second variable and $0 < \nu < b(\cdot, \cdot) < L$ for some constants $\nu, L$. The natural energy space for minimizers is\begin{align*}
    W_1(\Omega)&:=\left\{\vp\in W^{1,1}_0(\Omega):\quad \int_\Omega |\nabla \vp(x)|^{p(x)} + a(x)|\nabla \vp(x)|^{q(x)}\,dx<\infty\right\}\,.
    \end{align*} 
    Typical assumption imposed on the variable exponent is log-H\"older continuity. A function $p$ is said to be log-H\"older continuous (denoted $p\in\Plog(\Omega)$), if there exists $c>0$, such that for $x,y$ close enough it holds that
\[|p(x)-p(y)|\leq \frac{c}{\log\left({1}/{|x-y|}\right)}\,.\] 
    The results from~\cite{ibm, bgs-arma} state that $p, q \in \Plog$ and $a \in C^{0, \alpha}$ for $\alpha \geq \sup_{x} (q(x) - p(x))$ guarantees the absence of the Lavrentiev's phenomenon for functional~\eqref{eq:defVarDou}. This condition is meaningful only provided that $q(x) \leq p(x) + 1$ for every $x \in \Omega$. However, as for double-phase functional~\eqref{eq:defF}, we can assume only $a \in \SP^{\vk}$ for $\vk \geq \sup_{x} (q(x) - p(x))$ instead of $a \in C^{0, \alpha}$. That is, we have the following counterpart of Theorem~\ref{theo:lavrentiev-model}.
\begin{theo}\label{theo:lavrentiev-vardou}
    Let $\Omega \subset \rn$ be a bounded Lipschitz domain and let the functional $\cE_1$ be given by~\eqref{eq:defVarDou} for $p, q \in \Plog$, $1 < p < q$ and $a:\Omega\to\rp$. Suppose that $a \in \SP^{\vk}(\Omega)$ for $\vk > 0$. Let $u_0$ be such that $\cE_1[u_0] < \infty$. The following assertions hold true.
    \begin{enumerate}
        \item[(i)] If  $\vk \geq \sup_{x}(q(x)-p(x))$, then
            \begin{equation}\label{eq:janlav1}
        \inf_{v \in u_0 + W_1(\Omega)} \cE_1[v] = \inf_{w \in u_0 + C_c^\infty(\Omega)} \cE_1[w]\,. 
    \end{equation}
        \item[(ii)] Let $\gamma \in (0, 1]$. If $\vk \geq \left(\sup_{x}(q(x)-p(x))\right)(1-\gamma)$, then
            \begin{equation}\label{eq:janlav2}
        \inf_{v \in u_0 + W_1(\Omega) \cap C^{0, \gamma}(\Omega)} \cE_1[v] = \inf_{w \in u_0 + C_c^\infty(\Omega)} \cE_1[w]\,.
    \end{equation}
    \end{enumerate}
\end{theo}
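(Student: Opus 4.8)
The plan is to mirror the proof of Theorem~\ref{theo:lavrentiev-general} almost verbatim, the only difference being that the underlying approximation result (the analogue of Theorem~\ref{theo:approx}) must now be established for the variable-exponent modular rather than the fixed-exponent one. So the first task is to state and prove a density theorem: for $a\in\SP^\vk(\Omega)$ with $\vk\geq\sup_x(q(x)-p(x))$ in case \emph{(i)} (resp.\ $\vk\geq(\sup_x(q(x)-p(x)))(1-\gamma)$ in case \emph{(ii)}), smooth compactly supported functions are dense in $W_1(\Omega)$ (resp.\ in $W_1(\Omega)\cap C^{0,\gamma}(\Omega)$ with respect to modular convergence). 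Once this is in hand, the passage from density to the absence of the Lavrentiev gap is identical to the argument in the proof of Theorem~\ref{theo:lavrentiev-general}: take a minimizer $u$ over $u_0+W_1(\Omega)$ guaranteed by the direct method, approximate it by $(u_k)\subset C_c^\infty(\Omega)$, use continuity of $b$ in its second variable to get pointwise (in measure) convergence of the integrands, verify uniform integrability via the sandwich bound $0<\nu<b<L$, and conclude by the Generalized Vitali Convergence Theorem that $\cE_1[u_0+u_k]\to\cE_1[u_0+u]$.

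The heart of the matter is therefore the density result, and I would adapt the convolution-with-shrinking proof of Theorem~\ref{theo:approx} to the variable-exponent setting. The key estimate to reproduce is~\eqref{eq:jan8}: there one bounds $\delta^\vk|\nabla S_\delta\vp(x)|^{q-p}$, and the exponent $q-p$ is now replaced by the pointwise difference $q(x)-p(x)$. Using the $L^\infty$-bound $\|\nabla S_\delta\vp\|_{L^\infty}\leq C_S\delta^{\gamma-1}$ from Lemma~\ref{lem:dec1}, one obtains
\[
\delta^\vk|\nabla S_\delta\vp(x)|^{q(x)-p(x)}\leq C_S^{q(x)-p(x)}\,\delta^{\vk+(q(x)-p(x))(\gamma-1)}\,,
\]
and since $\vk+(q(x)-p(x))(\gamma-1)\geq \vk+(\sup_x(q(x)-p(x)))(\gamma-1)\geq0$ by hypothesis and $\delta\in(0,1)$, the exponent of $\delta$ is nonnegative, so the quantity is bounded uniformly in $\delta$ (the prefactor is controlled since $q(x)-p(x)$ is bounded and $C_S$ may be assumed $\geq1$). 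The rest of the chain~\eqref{eq:jan7}--\eqref{eq:jan12} then goes through, with the $\SP^\vk$-inequality $a(x)\leq C_a(a(y)+|x-y|^\vk)$ used exactly as before to transfer the weight from $x$ to a nearby point, and with the Jensen-type bounds~\eqref{eq:jan3}--\eqref{eq:jan4} producing the domination by $S_\delta(M_1(\cdot,|\nabla\vp|))$, where $M_1(x,t)=t^{p(x)}+a(x)t^{q(x)}$.

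The main obstacle, and the reason the log-H\"older hypothesis on $p,q$ enters, is handling the \emph{variable} exponent inside the convolution, i.e.\ passing from $|\nabla S_\delta\vp(x)|^{p(x)}$ to $S_\delta(|\nabla\vp|^{p(\cdot)})$ in the analogues of~\eqref{eq:jan3}--\eqref{eq:jan4}. Applying Jensen's inequality requires the exponent to be constant on the support of the mollifier, so one must absorb the discrepancy $|p(x)-p(y)|$ for $|x-y|\lesssim\delta$. This is precisely where $p\in\Plog$ is used: the log-H\"older modulus yields $|\nabla S_\delta\vp(x)|^{p(y)-p(x)}\lesssim\delta^{-c(p(x)-p(y))}$-type factors that remain bounded because the gradient blows up at most polynomially in $\delta^{-1}$ while the exponent gap decays like $1/\log(1/\delta)$. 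This is a standard but delicate computation in the theory of variable-exponent Musielak--Orlicz spaces, carried out in~\cite{ibm,bgs-arma}, and I would invoke those estimates rather than reprove them from scratch. With that single modification absorbed, the remaining steps reduce to the fixed-exponent arguments already given, so I would present the proof as a pointer to Theorem~\ref{theo:approx} and Theorem~\ref{theo:lavrentiev-general} together with the log-H\"older estimate supplied by~\cite{ibm,bgs-arma}.
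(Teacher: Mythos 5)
Your proposal is correct and takes essentially the same route as the paper, which in fact gives no detailed proof at all — only the remark that one forms counterparts of Theorem~\ref{theo:approx} and Theorem~\ref{theo:lavrentiev-general}, where the proof of Theorem~\ref{theo:approx} ``requires only modification of~\eqref{eq:jan7}.'' Your fleshed-out version of exactly that plan, including the observation that $p,q\in\Plog$ is what absorbs the exponent discrepancy on the mollifier's support against the at-most-polynomial blow-up $\|\nabla S_\delta\vp\|_{L^\infty}\lesssim\delta^{\gamma-1}$ (as in~\cite{ibm,bgs-arma}), supplies precisely the modifications the authors have in mind.
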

One may also formulate corresponding counterparts of Theorem~\ref{theo:approx} and Theorem~\ref{theo:lavrentiev-general}. Note that the proof of Theorem~\ref{theo:approx} requires only modification of~\eqref{eq:jan7}.
\newline

%
%
%
\subsection{Orlicz multi phase functionals}

As in~\cite{basu, ibm, BaaBy} we can consider Orlicz multi phase functional, that is
\begin{equation}\label{eq:defOrlicz}
    \mathcal{E}_2[u] = \int_{\Omega} b(x,u)\left(\phi(|\nabla u(x)|) + \sum_{i=1}^{k}a_i(x)\psi_i(|\nabla u(x)|)\right)\,dx\,,
\end{equation}
where $\phi, \psi_i : [0, \infty) \to [0, \infty)$ are Young functions that satisfy $\Delta_2$ condition, $\lim_{t \to \infty} \frac{\psi_i(t)}{\phi(t)} = \infty$, $0 \leq a_i \in L^{\infty}(\Omega)$, for every $i=1,2,\dots, k$, and $b$ is continuous with respect to the second variable and $0 < \nu < b(\cdot, \cdot) < L$ for some constants $\nu, L$.  The natural energy space for minimizers is
\begin{align*}
    W_2(\Omega)&:=\left\{\vp\in W^{1,1}_0(\Omega):\quad \int_\Omega \phi(|\nabla \vp(x)|) + \sum_{i=1}^{k}a_i(x)\psi_i(|\nabla \vp(x)|)\,dx<\infty\right\}\,.
    \end{align*}
To get the absence of the Lavrentiev's phenomenon in this case,  we can modify our definition of the space $\SP^{\vk}$. For an arbitrary increasing and continuous function $\omega : [0, \infty) \to [0, +\infty)$ satisfying $\omega(0)=0$, one can define the space $\SP_{\omega}$ such that
\begin{equation}\label{eq:defom}
    a \in \SP^{\omega}(\Omega)\iff \exists_{C > 0} \quad \forall_{x, y} \quad a(x) \leq C(a(y) + \omega(|x-y|))\,.
\end{equation}
If $\omega$ defines appropriate modulus of continuity, i.e., $\omega$ is concave and $\omega(0) = 0$, then the fact that $a \in \SP_{\omega}$ is equivalent to the existence of a function $\wt a$, being comparable with $a$, and having modulus of continuity $\omega$, that is, for some $C > 0$ it holds that
\begin{equation*}
    \forall_{x, y} \quad |\wt a(x) - \wt a(y)| \leq C\omega(|x-y|)\,.
\end{equation*}
If $\omega \in \Delta_2$ is not necessarily concave, then from the fact that $\omega^{-1}(a)$ is comparable to some Lipschitz function, we can infer that $a \in \SP^{\omega}$. \newline 
Using the definition~\eqref{eq:defom}, we can obtain the counterpart of Theorem~\ref{theo:lavrentiev-model} for the functional of type~\eqref{eq:defOrlicz}.
\begin{theo}\label{theo:lavrentiev-varorl}
    Let $\Omega \subset \rn$ be a bounded Lipschitz domain and let the functional $\cE_2$ be given by~\eqref{eq:defOrlicz} for $\phi, \psi_i \in \Delta_2$, and $a_i:\Omega\to\rp$, for every $i=1,2,\dots,k$. Suppose that $a_i \in \SP^{\omega_i}(\Omega)$, where $\omega_i : \rp \to \rp$ is increasing and such that $\omega_i(0) = 0$ for every $i$. Let $u_0$ be such that $\cE_2[u_0] < \infty$. The following assertions hold true.
    \begin{enumerate}
        \item[(i)] If  $\omega_i(t) \leq \frac{\phi(t^{-1})}{\psi_i(t^{-1})}$ for every $i$, then
            \begin{equation}
        \inf_{v \in u_0 + W_2(\Omega)} \cE_2[v] = \inf_{w \in u_0 + C_c^\infty(\Omega)} \cE_2[w]\,. 
    \end{equation}
        \item[(ii)] Let $\gamma \in (0, 1]$. If $\omega_i(t) \leq \frac{\phi(t^{\gamma-1})}{\psi_i(t^{\gamma-1})}$ for every $i$, then
            \begin{equation}
        \inf_{v \in u_0 + W_2(\Omega) \cap C^{0, \gamma}(\Omega)} \cE_2[v] = \inf_{w \in u_0 + C_c^\infty(\Omega)} \cE_2[w]\,.
    \end{equation}
    \end{enumerate}
\end{theo}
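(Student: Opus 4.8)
The plan is to reproduce the two-step scheme of Theorem~\ref{theo:lavrentiev-model} and Theorem~\ref{theo:lavrentiev-general}: first establish the Orlicz multi phase analogue of the density result Theorem~\ref{theo:approx}, and then deduce the absence of the gap verbatim from the Vitali-based argument of Theorem~\ref{theo:lavrentiev-general}. Writing $\Phi(x,t):=\phi(t)+\sum_{i=1}^{k}a_i(x)\psi_i(t)$ for the modular attached to $\cE_2$, the second step is essentially unchanged. Since $\phi,\psi_i\in\Delta_2$, the modular and norm topologies of $W_2(\Omega)$ coincide, the truncation Lemma~\ref{lem:trunc} still reduces matters to bounded (resp. $C^{0,\gamma}$) competitors, the Generalized Vitali Convergence Theorem~\eqref{eq:genVit} holds for $\Phi$, and the growth bound $\nu\Phi\le G\le L\Phi$ implied by $0<\nu<b<L$ plays the role of~\eqref{sandwich}. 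Continuity of $b$ in its second variable gives convergence of $b(\cdot,u_0+u_k)$ in measure, and Vitali then yields $\cE_2[u_0+u_k]\to\cE_2[u_0+u]$ for a minimizer $u$ furnished by the direct method. Hence the whole weight of the proof sits in the density statement.

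For the density part I would rerun the convolution-with-shrinking argument of Theorem~\ref{theo:approx}, the only change being the analogue of the chain~\eqref{eq:jan7}--\eqref{eq:jan12}. Fix $\vp$ (bounded if $\gamma=0$, in $C^{0,\gamma}$ otherwise) and set $s:=|\nabla S_\delta\vp(x)|$; Lemma~\ref{lem:dec1} gives the uniform bound $s\le C_S\delta^{\gamma-1}$. For $|x-y|\le\tau\delta$ the defining inequality~\eqref{eq:defom} of $\SP^{\omega_i}$ yields $a_i(x)\le C(a_i(y)+\omega_i(\tau\delta))$, so that
\[
\Phi(x,s)\le \phi(s)+C\sum_{i=1}^{k}a_i(y)\,\psi_i(s)+C\sum_{i=1}^{k}\omega_i(\tau\delta)\,\psi_i(s)\,.
\]
The crucial point is the counterpart of~\eqref{eq:jan8}, namely that each extra term is absorbed into the $\phi$-term uniformly in $\delta$:
\[
\omega_i(\tau\delta)\,\psi_i(s)\le C\,\phi(s)\qquad\text{for all } s\le C_S\delta^{\gamma-1}\,.
\]
To see this, use that $t\mapsto\psi_i(t)/\phi(t)$ is increasing, so $\psi_i(s)/\phi(s)\le \psi_i(C_S\delta^{\gamma-1})/\phi(C_S\delta^{\gamma-1})$; the $\Delta_2$ conditions then strip the constant $C_S$ and the factor $\tau^{\gamma-1}$, reducing the claim to the hypothesis $\omega_i(t)\le\phi(t^{\gamma-1})/\psi_i(t^{\gamma-1})$ evaluated at $t=\tau\delta$. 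This is precisely the instance $\phi(t)=t^p$, $\psi_i(t)=t^q$, $\omega_i(t)=t^\vk$ of~\eqref{eq:jan8}, for which the two displayed conditions read $\vk\ge q-p$ and $\vk\ge(q-p)(1-\gamma)$.

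With this absorption in hand, the coefficient $\omega_i(\tau\delta)$ has been converted, up to a $\delta$-independent constant, into a genuine weight, and one proceeds exactly as in~\eqref{eq:jan2}: replacing $a_i(x)$ by $\inf_{z\in B_{\tau\delta}(x)}a_i(z)$ and applying Jensen's inequality as in~\eqref{eq:jan3}--\eqref{eq:jan4} (licit since $\phi,\psi_i$ are convex) to move the convolution outside, one arrives at the pointwise domination $\Phi(x,|\nabla S_\delta\vp(x)|)\le C\,S_\delta\big(\Phi(\cdot,|\nabla\vp(\cdot)|)\big)(x)$, the analogue of~\eqref{eq:jan12}. Since $\vp\in W_2(\Omega)$ gives $\Phi(\cdot,|\nabla\vp|)\in L^1(\Omega)$, Lemma~\ref{lem:dec2} makes the right-hand side $L^1$-convergent, hence uniformly integrable, hence so is the left-hand family; together with convergence in measure and~\eqref{eq:genVit} this is modular convergence. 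The passage from a star-shaped domain to a general bounded Lipschitz $\Omega$ via the partition of unity is identical to Theorem~\ref{theo:approx} and uses only $\Delta_2$.

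The main obstacle is precisely the uniform absorption $\omega_i(\tau\delta)\,\psi_i(s)\le C\phi(s)$: the $L^\infty$ bound only gives $s\lesssim\delta^{\gamma-1}$, so $s$ ranges over a growing interval as $\delta\to0$, and one must control $\psi_i(s)/\phi(s)$ at its worst (largest) admissible value while simultaneously absorbing the constants $C_S$ and $\tau^{\gamma-1}$ produced by the convolution and by the $\SP^{\omega_i}$ inequality. This is where both the superlinear, increasing behaviour of $\psi_i/\phi$ and the $\Delta_2$ hypotheses are indispensable; everything else transcribes the double phase proof word for word.
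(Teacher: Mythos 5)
Your overall architecture --- rerun the convolution-with-shrinking proof of Theorem~\ref{theo:approx} with \eqref{eq:jan7}--\eqref{eq:jan8} adapted, then repeat the Vitali argument of Theorem~\ref{theo:lavrentiev-general} --- is exactly what the paper intends (the paper itself only records that ``the proof of Theorem~\ref{theo:approx} requires only modification of~\eqref{eq:jan7}''), and everything outside the absorption step transcribes correctly. However, the absorption step, which you yourself identify as the crux, is justified by an assumption the theorem does not make: you invoke that $t\mapsto\psi_i(t)/\phi(t)$ is increasing. The hypotheses grant only that $\phi,\psi_i$ are Young functions in $\Delta_2$ with $\lim_{t\to\infty}\psi_i(t)/\phi(t)=\infty$; such a ratio may oscillate, and then the inequality $\psi_i(s)/\phi(s)\le\psi_i(C_S\delta^{\gamma-1})/\phi(C_S\delta^{\gamma-1})$ for all $s\le C_S\delta^{\gamma-1}$ is unavailable. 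Since you use the comparison hypothesis only at the single point $t=\tau\delta$, it controls $\psi_i/\phi$ only at the single point $s=(\tau\delta)^{\gamma-1}$, and nothing in the stated assumptions propagates this control to the whole range of $s$. As written, your argument proves the theorem only under the additional, unstated hypothesis that each $\psi_i/\phi$ is nondecreasing.

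The gap can be closed within the stated hypotheses, because the monotonicity you need is that of $\omega_i$ (which \emph{is} assumed), not that of $\psi_i/\phi$. Fix $\gamma\in[0,1)$. Since the hypothesis $\omega_i(t)\le\phi(t^{\gamma-1})/\psi_i(t^{\gamma-1})$ holds for \emph{every} $t$, and the map $t\mapsto t^{\gamma-1}$ sends $[\tau\delta,\infty)$ onto $(0,(\tau\delta)^{\gamma-1}]$, any $r\le(\tau\delta)^{\gamma-1}$ can be written as $r=t^{\gamma-1}$ with $t\ge\tau\delta$, whence
\begin{equation*}
\omega_i(\tau\delta)\,\psi_i(r)\;\le\;\omega_i(t)\,\psi_i\big(t^{\gamma-1}\big)\;\le\;\phi\big(t^{\gamma-1}\big)\;=\;\phi(r)\,,
\end{equation*}
using only that $\omega_i$ is increasing. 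This gives the absorption on the full interval $(0,(\tau\delta)^{\gamma-1}]$ with constant $1$. For the remaining range $(\tau\delta)^{\gamma-1}\le s\le C_S\delta^{\gamma-1}=C_S\tau^{1-\gamma}(\tau\delta)^{\gamma-1}$, set $r_0:=(\tau\delta)^{\gamma-1}$ and $\Lambda:=C_S\tau^{1-\gamma}$; then $\Delta_2$ for $\psi_i$ and monotonicity of $\phi,\psi_i$ give
\begin{equation*}
\omega_i(\tau\delta)\,\psi_i(s)\;\le\;\omega_i(\tau\delta)\,\psi_i(\Lambda r_0)\;\le\;C_\Lambda\,\omega_i(\tau\delta)\,\psi_i(r_0)\;\le\;C_\Lambda\,\phi(r_0)\;\le\;C_\Lambda\,\phi(s)\,,
\end{equation*}
with $C_\Lambda$ independent of $\delta$. (For $\gamma=1$ the hypothesis carries information only at $r=1$; there one instead bounds $\omega_i(\tau\delta)\psi_i(s)\le\omega_i(\tau)\psi_i(C_S)$ for $s\le C_S$ and $\delta\le1$, and adds this constant to the pointwise domination in the counterpart of~\eqref{eq:jan12}, which does not affect uniform integrability over the bounded set $\Omega$.) With this replacement your proof is complete and coincides with the paper's intended argument.
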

    One may also formulate counterparts of Theorem~\ref{theo:approx} and Theorem~\ref{theo:lavrentiev-general}. Note that, similarly as in the previous section, the proof of Theorem~\ref{theo:approx} requires only modification of~\eqref{eq:jan7}. 
    Note that Theorem~\ref{theo:lavrentiev-varorl} improves the result of~\cite[Theorem 3.1]{BaaBy} by the use of the scale $\SP_{\omega}$ instead of $C^{\omega}$. In particular this allows to take into account Orlicz multi phase problems with $\limsup_{t\to\infty} t^{\sigma}\phi(t)/\psi_i(t)>0$ for arbitrary $\sigma>0$, which for $\sigma>1$ is excluded from the framework of~\cite{BaaBy}.

\subsection{Orthotropic case}
Our results may also be generalized to cover some types of orthotropic functionals. In particular, let us consider the orthotropic double phase functional given by
\begin{equation}\label{eq:defFort}
    \cE_3[u] = \sum_{i=1}^{n} \int_{\Omega} b_i(x,u)\left(|\partial_i u(x)|^{p_i} + a_i(x)|\partial_i u(x)|^{q_i}\right)\,dx\,,  
\end{equation}
where for every $i$ it holds that $1 < p_i < q_i$ and $0 \leq a_i \in L^{\infty}(\Omega)$, and $b_i$ is continuous with respect to the second variable and $0 < \nu < b(\cdot, \cdot) < L$ for some constants $\nu, L$, for every $i$.  The natural energy space for minimizers is
 \begin{align*}
    W_3(\Omega)&:=\left\{\vp\in W^{1,1}_0(\Omega):\quad \int_\Omega |\partial_i \vp(x)|^{p_i} + a_i(x)|\partial_i \vp(x)|^{q_i}\,dx<\infty\right\}\,.
    \end{align*}
We point out that for functionals satisfying such a decomposition, it is sufficient for the absence of the Lavrentiev's gap to look at each coordinate separately. In particular, we have the following theorem.
\begin{theo}
    Let $\Omega \subset \rn$ be a bounded Lipschitz domain and let the functional $\cE_3$ be given by~\eqref{eq:defFort}, where for every $i$ we have $1<p_i<q_i<\infty$ and $a_i:\Omega\to\rp$ that is allowed to vanish.  Suppose that for every $i$ it holds that $a_i \in \SP^{\vk_i}(\Omega)$ and $\vk_i > 0$. Moreover, let $u_0$ be such that $\cE_3[u_0] < \infty$.\\ The following assertions hold true.
    \begin{enumerate}
        \item[(i)] If  $\vk_i \geq q_i-p_i$ for every $i$, then
            \begin{equation}
        \inf_{v \in u_0 + W_3(\Omega)} \cE_3[v] = \inf_{w \in u_0 + C_c^\infty(\Omega)} \cE_3[w]\,. 
    \end{equation}
        \item[(ii)] Let $\gamma \in (0, 1]$. If $\vk_i \geq (q_i-p_i)(1-\gamma)$ for every $i$, then
            \begin{equation}
        \inf_{v \in u_0 + W_3(\Omega) \cap C^{0, \gamma}(\Omega)} \cE_3[v] = \inf_{w \in u_0 + C_c^\infty(\Omega)} \cE_3[w]\,.
    \end{equation}
    \end{enumerate}
\end{theo}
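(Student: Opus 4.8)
The plan is to mirror the two-step strategy behind Theorem~\ref{theo:lavrentiev-general}: first establish an orthotropic density result (the analogue of Theorem~\ref{theo:approx}) asserting that every $\vp\in W_3(\Omega)$ --- respectively every $\vp\in W_3(\Omega)\cap C^{0,\gamma}(\Omega)$ in case (ii) --- can be approximated in the topology of $W_3(\Omega)$ by a sequence $(\vp_\delta)_\delta\subset C_c^\infty(\Omega)$; and then deduce the absence of the gap by combining this density with a Vitali convergence argument. The decisive structural feature is that $\cE_3$ decomposes as a sum of one-dimensional double phase contributions, one per coordinate direction, so that both steps reduce to running the arguments of Theorem~\ref{theo:approx} and Theorem~\ref{theo:lavrentiev-general} separately in each direction $\partial_i$ with the exponents $p_i,q_i$ and the weight $a_i$.

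For the density step I would use exactly the convolution-with-shrinking operator $S_\delta$ from~\eqref{Sdxi}. The key observation is that~\eqref{eq:nablaout} holds componentwise, $\partial_i S_\delta\vp=\tfrac1{\kappa_\delta}S_\delta(\partial_i\vp)$, and that $|\partial_i S_\delta\vp|\le|\nabla S_\delta\vp|$, so the $L^\infty$ bound~\eqref{eq:jan1} is available for each partial derivative. Then, for every fixed $i$, I would repeat the chain~\eqref{eq:jan7}--\eqref{eq:jan12} verbatim with $(p,q,a,\nabla)$ replaced by $(p_i,q_i,a_i,\partial_i)$; the only place where the exponent hypothesis enters is the analogue of~\eqref{eq:jan8}, which now demands $\vk_i+(q_i-p_i)(\gamma-1)\ge0$, i.e. precisely $\vk_i\ge q_i-p_i$ in case (i) (taking $\gamma=0$) and $\vk_i\ge(q_i-p_i)(1-\gamma)$ in case (ii). This produces, for each $i$, the pointwise domination $|\partial_i S_\delta\vp|^{p_i}+a_i|\partial_i S_\delta\vp|^{q_i}\le C\,S_\delta\big(|\partial_i\vp|^{p_i}+a_i|\partial_i\vp|^{q_i}\big)$, whence uniform integrability of the right-hand side along $\delta$ follows from Lemma~\ref{lem:dec2} and the Vitali theorem exactly as in~\eqref{eq:jan12}. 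Summing the $n$ estimates and invoking~\eqref{eq:genVit} together with the componentwise convergence in measure ($\partial_i S_\delta\vp\to\partial_i\vp$ via Lemma~\ref{lem:dec2}) gives modular convergence of the full integrand, hence $\vp_\delta\to\vp$ in $W_3(\Omega)$ on star-shaped pieces; the reduction to a general bounded Lipschitz domain is handled by the same partition-of-unity patching as in the last part of the proof of Theorem~\ref{theo:approx}.

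With the density result in hand, the no-gap conclusion follows as in Theorem~\ref{theo:lavrentiev-general}. The inequality $\inf_{v\in u_0+W_3(\Omega)}\cE_3[v]\le\inf_{w\in u_0+C_c^\infty(\Omega)}\cE_3[w]$ is immediate from $C_c^\infty(\Omega)\subset W_3(\Omega)$. For the reverse, direct methods furnish a minimizer $u_0+u$ with $u\in W_3(\Omega)$, which I would approximate by $(u_k)_k\subset C_c^\infty(\Omega)$ converging in $W_3(\Omega)$ by the density step. Since each $b_i$ is continuous in its second slot and $(u_0+u_k)\to(u_0+u)$ together with $\partial_i(u_0+u_k)\to\partial_i(u_0+u)$ in measure, the integrand converges in measure. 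For uniform integrability I would use the two-sided bound $\nu\sum_i(|\partial_i\cdot|^{p_i}+a_i|\partial_i\cdot|^{q_i})\le(\text{integrand})\le L\sum_i(|\partial_i\cdot|^{p_i}+a_i|\partial_i\cdot|^{q_i})$, which reduces the question to uniform integrability of the $n$ single-direction double phase densities already granted by the $W_3$-convergence of $(u_k)_k$; the Vitali theorem then yields $\cE_3[u_0+u_k]\to\cE_3[u_0+u]$, so that $\inf_{w\in u_0+C_c^\infty(\Omega)}\cE_3[w]\le\cE_3[u_0+u]=\inf_{v\in u_0+W_3(\Omega)}\cE_3[v]$. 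Part (ii) is identical upon approximating within $W_3(\Omega)\cap C^{0,\gamma}(\Omega)$ and using case (ii) of the density step.

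I expect the only genuine point requiring care --- rather than a true obstacle --- to be verifying that the estimate~\eqref{eq:jan7} genuinely localises to a single coordinate, namely that the $\SP^{\vk_i}$-inequality for $a_i$ combined with the componentwise bound $|\partial_i S_\delta\vp|\le\|\nabla S_\delta\vp\|_{L^\infty}\le C_S\delta^{\gamma-1}$ suffices to absorb the error term $\tau^{\vk_i}\delta^{\vk_i}|\partial_i S_\delta\vp|^{q_i-p_i}$ under the sharp balance $\vk_i\ge(q_i-p_i)(1-\gamma)$; everything else is a direction-by-direction transcription of Theorem~\ref{theo:approx} and Theorem~\ref{theo:lavrentiev-general}.
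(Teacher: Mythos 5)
Your proposal is correct and takes essentially the same approach the paper intends: the paper gives no written proof of this theorem, stating only that one should treat each coordinate separately by formulating coordinate-wise counterparts of Theorem~\ref{theo:approx} and Theorem~\ref{theo:lavrentiev-general}, which is precisely what you carry out. The key points you identify --- the componentwise identity $\partial_i S_\delta\vp=\tfrac{1}{\kappa_\delta}S_\delta(\partial_i\vp)$, the bound $|\partial_i S_\delta\vp|\le\|\nabla S_\delta\vp\|_{L^\infty}\le C_S\delta^{\gamma-1}$, and the resulting per-coordinate version of~\eqref{eq:jan7}--\eqref{eq:jan12} under $\vk_i\ge(q_i-p_i)(1-\gamma)$ --- are exactly what makes the direction-by-direction transcription work.
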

Corresponding counterparts of Theorem~\ref{theo:approx} and Theorem~\ref{theo:lavrentiev-general} may be formulated as well, together with counterparts for orthotropic versions of functionals~\eqref{eq:defVarDou} and~\eqref{eq:defOrlicz}.

\section*{Appendix} 


\begin{proof}[Proof of Proposition~\ref{prop:C1ainSPa+1}] We concentrate on {\it (i)}. Our reasoning is inspired by the proof of Glaeser-type inequality, see~\cite{Dol}. Suppose by contradiction that $a \not\in \SP^{1 + \alpha}$. This implies that there exist sequences $(x_k), (y_k) \subset \Omega$ and $C_k\in\R$ with $\lim_{k\to\infty}C_k=\infty$ such that
\begin{equation}\label{eq:mar1}
    a(x_k) \geq C_k(a(y_k) + |x_k-y_k|^{1+\alpha})\,.
\end{equation}
As $\overline{\Omega}$ is compact, by taking subsequences if necessary,  we may assume that $x_k \to \bar x, y_k \to \bar y$, where $\bar x, \bar y \in \overline{\Omega}$. Observe that taking limits in~\eqref{eq:mar1}, we obtain that for every $C>0$ we have $a(\bar x) > C \cdot (a(\bar y) + |\bar x-\bar y|^{1+\alpha})$. As $a$ is bounded, we have that $a(\bar y) + |\bar x-\bar y|^{1+\alpha} = 0$. That is, we have $\bar x = \bar y$ and $a(\bar x) = 0$. We shall denote $x_0: = \bar x =\bar  y$. As $a(x_0) = 0$, by assumption, we have $x_0 \in \Omega$ and  there exist $R > 0$ such that $B(x_0, R) \subseteq \Omega$. 

Let us fix any $\nu \in \rn$ such that $|\nu| = 1$. By Lagrange Mean Value Theorem, for arbitrary $z \in B(x_0, R)$ and $h \in \R$ such that $z + h\nu \in B(x_0, R)$, we have 
\begin{equation}\label{eq:mar2}
    a(z + h\nu) = a(z) + h\frac{\partial a}{\partial \nu}(z + \varsigma \nu),
\end{equation}
where $\varsigma \in [-|h|,|h|]$ .  Using that $a \in C^{1, \alpha}(\Omega)$, we get that for some constant $C$, independent of $\nu$, we have
\begin{equation*}
    \left|\frac{\partial a}{\partial \nu}(z + \varsigma \nu) - \frac{\partial a}{\partial \nu}(z)\right| \leq C|\varsigma|^{\alpha} \leq C|h|^{\alpha}\,,
\end{equation*}
and, consequently,
\begin{equation*}
    \frac{\partial a}{\partial \nu}(z) - C|h|^{\alpha} \leq \frac{\partial a}{\partial \nu}(z + \varsigma \nu) \leq \frac{\partial a}{\partial \nu}(z) + C|h|^{\alpha}\,.
\end{equation*}
Thus, for $h \geq 0$ it holds that
\begin{equation*}
    h\frac{\partial a}{\partial \nu}(z + \varsigma \nu) \leq h\frac{\partial a}{\partial \nu}(z) + Ch|h|^{\alpha} = h\frac{\partial a}{\partial \nu}(z) + C|h|^{\alpha+1}\,,
\end{equation*}
while for $h < 0$ we have
\begin{equation*}
    h\frac{\partial a}{\partial \nu}(z + \varsigma \nu) \leq h\frac{\partial a}{\partial \nu}(z) - Ch|h|^{\alpha} = h\frac{\partial a}{\partial \nu}(z) + C|h|^{\alpha+1}\,.
\end{equation*}
%
By~\eqref{eq:mar2} and the last two displays, it means that
\begin{equation*}
    a(z + h\nu) \leq a(z) + h\frac{\partial a}{\partial \nu}(z) + C|h|^{1+\alpha}\,. 
\end{equation*}
As $a \geq 0$, we have
\begin{equation}\label{eq:mar3}
    0 \leq a(z) + h\frac{\partial a}{\partial \nu}(z) + C|h|^{1+\alpha}\,,
\end{equation}
as long as $h \in \R$ and $z, z + h\nu \in B(x_0, R)$. 

For any $z \in B(x_0, R)$, let us now denote
\begin{equation*}
h_z \coloneqq -c\left|\frac{\partial a}{\partial \nu}(z)\right|^{1/\alpha} \text{sgn}\left(\frac{\partial a}{\partial \nu}(z)\right)\ \text{ with }\ c = (2C)^{-1/\alpha}\,.\end{equation*}
Note that as $a(x_0) = 0$, we also have $\nabla a(x_0) = 0$, as $x_0 \in \Omega$ is a minimum of $a$. Since $a \in C^{1, \alpha}(\overline{\Omega})$, for any $z \in B(x_0,R)$ we have $\left|\frac{\partial a}{\partial \nu}(z)\right| \leq C|z-x_0|^{\alpha}$, which gives us
\begin{equation*}
    |z+h_z\nu-x_0| \leq |z-x_0| + |h_z| = |z-x_0| + c\left|\frac{\partial a}{\partial \nu}(z)\right|^{1/\alpha} \leq (1+cC^{1/\alpha})|z-x_0|\,.
\end{equation*}
Therefore, if we take $r \coloneqq \frac{R}{1 + cC^{1/\alpha}}$, for any $z \in B(x_0, r)$ we have $z+h_z\nu\in B(x_0,R)$. Hence, by \eqref{eq:mar3} we obtain%
\begin{equation*}
    0 \leq a(z) + h_z\frac{\partial a}{\partial \nu}(z) + C|h_z|^{1+\alpha} = a(z) - \tfrac{1}{2}c\left|\frac{\partial a}{\partial \nu}(z)\right|^{1 + 1/\alpha}\,,
\end{equation*}
which means that for some constant $C_a > 0$ it holds that \begin{equation}\label{eq:maininq}
    \left|\frac{\partial a}{\partial \nu}(z)\right| \leq C_aa(z)^{\frac{\alpha}{1 + \alpha}}\,.
\end{equation}
Note that by ambiguity of $\nu$, estimate~\eqref{eq:maininq} holds for arbitrary $\nu \in \rn$ such that $|\nu| = 1$.

Let us take any $x, y \in B(x_0, r)$. Note that we can always find $\tilde{y} \in [y, x]$ such that $a(\tilde{y}) \leq a(y)$ and $a > 0$ on the segment $(y, x)$. Indeed, if $a > 0$ on $(y,x)$, then we can take $\tilde{y} = y$. In other case, we may define
\begin{equation*}
    \tilde{t} \coloneqq \sup \{t \in [0, 1] : a(y + t(x-y)) = 0\}
\end{equation*}
and set $\tilde{y} \coloneqq y + \tilde{t}(x-y)$. We see by the definition that $a(\tilde{y}) = 0 \leq a(y)$ and $a$ is positive on $(y, x)$. Therefore, if we set $\nu = \frac{x - \tilde{y}}{|x - \tilde{y}|}$, the function $t \mapsto a(\tilde{y} + t\nu)^{\frac{1}{1+\alpha}}$ is differentiable for $t \in (0, |x-\tilde{y}|)$, with derivative equal to $\left(\frac{\partial a}{\partial \nu}(\tilde{y}+t\nu)\right)  (a(\tilde{y} + t\nu))^{-\frac{\alpha}{1+\alpha}}$. By the definition of $\tilde{y}$ and~\eqref{eq:maininq}, we have
\begin{align*}
    a(x)^{\frac{1}{1+\alpha}}-a(y)^{\frac{1}{1+\alpha}} \leq a(x)^{\frac{1}{1+\alpha}}-a(\tilde{y})^{\frac{1}{1+\alpha}} &= \int_{0}^{|x-\tilde{y}|} \left(\frac{\partial a}{\partial \nu}(\tilde{y}+t\nu)\right)  (a(\tilde{y} + t\nu))^{-\frac{\alpha}{1+\alpha}} \,dt \leq C_a|x-\tilde{y}| \leq C_a|x-y|\,,
\end{align*}
which by symmetry means that $a^{\frac{1}{1+\alpha}}$ is Lipschitz on $B(x_0, r)$. By Remark~\ref{rem:zk}, we have that $a \in \SP^{1+\alpha}(B(x_0, r))$, which contradicts~\eqref{eq:mar1}, as $(x_k)_k$ and $(y_k)_k$ converge to $x_0$. Hence, $a \in \SP^{1+\alpha}(\Omega)$.\newline

For {\it (ii)} it is enough to consider $x_0\in\Omega\subset\rn$ and $a(x)=|x-x_0|^2 $, which is smooth, but only in $\SP^2$.
\end{proof}

\section*{Acknowledgement}
 We would like to express gratitude to Pierre Bousquet for fruitful discussions that in particular drawn our attention to issues solved in Proposition~\ref{prop:C1ainSPa+1}.

The project started with discussions of all authors during Thematic Research Programme Anisotropic and Inhomogeneous Phenomena at University of Warsaw in 2022.

\printbibliography
\end{document}